\definecolor{alert}{rgb}{0.8,0,0}
\newcommand{\R}{\mathbb{R}}
\newcommand{\h}{\mathbb{H}}
\newcommand{\M}{\mathbb{M}}
\newcommand{\mf}{\mathbb{M}\times_f\mathbb{R}}
\newcommand{\grad}{\mathrm{grad}}
\renewcommand{\div}{\mathrm{div}}
\newtheorem{theorem}{Theorem}[section]
\newtheorem{corollary}[theorem]{Corollary}
\newtheorem{lemma}[theorem]{Lemma}
\theoremstyle{definition}
  \newtheorem{definition}[theorem]{Definition}
\theoremstyle{remark}
\numberwithin{equation}{section}
\title[Warped Product]{Height estimates for $H$-surfaces in the warped product $\mf$}
\author[]{Abigail Folha}
\address{Universidade Federal Fluminense}
\email{abigailfolha@vm.uff.br}
\author{Carlos Pe\~{n}afiel}
\address{Universidade Federal de Rio de Janeiro}
\email{penafiel@im.ufrj.br}
\author{Walcy Santos}
\address{Universidade Federal de Rio de Janeiro}
\email{walcy@im.ufrj.br}
\subjclass[2000]{Primary 53C42; Secondary 53C30}
\keywords{}
\begin{document}

\begin{abstract}
In this article, we consider compact surfaces $\Sigma$ having constant mean curvature $H$ ($H$-surfaces) whose boundary $\Gamma=\partial\Sigma\subset \M_0=\M\times_f\{0\}$ is transversal to the slice $\M_0$ of the warped product $\mf$, here $\M$ denotes a Hadamard surface. 
We obtain height estimate for a such surface $\Sigma$ having positive constant mean curvature involving the area of a part of $\Sigma$ above of $\M_0$ and the volume it bounds. Also we give general conditions for the existence of rotationally-invariant topological spheres having positive constant mean curvature $H$ in the warped product $\h\times_f\R$, where $\h$ denotes the hyperbolic disc. Finally we present a non-trivial example of such spheres.
 \end{abstract}

\maketitle

\section{Introduction}
It is a classical result that a compact graph with positive constant mean curvature $H$ in the Euclidean three-dimensional space  $\R^3$ and boundary on a plane can reach at most a height $1/H$ from this plane. Actually, this estimate is optimal because it is attained by the hemisphere of radius $1/H$, this classical result was proved in \cite{H}. More recently, in \cite{AEG}, the authors have obtained height estimates for compact embedded surfaces with positive constant mean curvature in a Riemannian product space $\M\times\R$ (here $\M$ denotes a Riemannian surface without boundary) and boundary on a slice.  In particular, they have obtained optimal height estimate for the homogeneous space $\h\times\R$ (here $\h$ denotes the complete connected simply connected hyperbolic disc having constant curvature $\kappa_g=-1$). The existence of height estimates for surfaces in a 3-dimensional ambient space reveals, in general, important properties on the geometric behaviour of these surfaces as well as existence and uniqueness results.

On the other hand,  in \cite{CR}, the authors have obtained height estimates for positive constant mean curvature, compact  embedded surfaces in the product space $\M^2\times\R$,  whose boundary lies in a slice $\M^2_0$, here $\M^2$ denotes a Hadamard surfaces. They have obtained a relation between the height, the area above this slice and the volume it bounds. They were inspired by \cite{LM}. In this article, we generalize this height estimate for the warped product $\mf$. We obtain the following estimate. 

\begin{theorem}
Let $\M$ be a Hadamard surface whose sectional curvature $K(\M)$ satisfies $K(\M)\le-\kappa\le0$ and let $\Sigma$ be a compact $H$-surface embedded in the warped product $\mf$, with boundary belonging to the slice $\M_0=\M\times_f\{0\}$ and transverse to $\M_0$. If $h$ denotes the height of $\Sigma_1$ with respect to $\M_0$, we have that
\begin{equation*}
h\le \frac{H\mathcal{F}A^{+}}{2\pi}-\kappa\frac{Vol(U_1)}{4\pi}.
\end{equation*}
where $\mathcal{F}$, $A^+$ and $U_1$ are defined in Section \ref{mainsection}. The equality holds if, and only if, $K(\M)\equiv -\kappa$ inside $U_1$ and $\Sigma$ is foliated by circles. Moreover if equality holds, then $f$ is constant on $B$.
\end{theorem}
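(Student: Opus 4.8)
The plan is to combine the flux formula attached to the vertical Killing field $\partial_t$ of $\mf$ with a slicing of $\Sigma_1$ by the level sets of the height function $t$, and to feed in the isoperimetric inequality of the Hadamard slices to produce the curvature term $-\kappa\,Vol(U_1)/4\pi$. I keep the notation of Section~\ref{mainsection}: $\Sigma_1$ is the part of $\Sigma$ above $\M_0$, $U_1$ the region of $\mf$ it bounds together with the domain of $\M_0$ enclosed by $\Gamma$, $A^{+}$ the area of $\Sigma_1$, $h=\max_{\Sigma_1}t$, and $\mathcal{F}$ the constant that controls the warping function $f$ over (the projection of) $U_1$. Orient $\Sigma_1$ by the unit normal $N$ for which $H>0$ and write $\Theta=\langle N,\partial_t\rangle/f$ for the angle function. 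Since $f$ does not depend on $t$, $\partial_t$ is a Killing field of $\mf$, so on $\Sigma_1$ one has $\div_{\Sigma_1}(\partial_t^{\top})=2H\langle N,\partial_t\rangle$; together with the pointwise identities $\langle\partial_t,\nabla^{\Sigma_1}t\rangle=1-\Theta^{2}$ and $\abs{\nabla^{\Sigma_1}t}=\sqrt{1-\Theta^{2}}/f$, this is the engine of the proof.

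For a regular value $s\in(0,h)$ of $t|_{\Sigma_1}$ (all but a null set of values, by Sard) put $\Sigma_1(s)=\Sigma_1\cap\{t\ge s\}$, $\Gamma(s)=\Sigma_1\cap\{t=s\}$ and $\Omega(s)=U_1\cap\{t=s\}$, a relatively compact domain of the slice $\M_s\cong\M$ with $\partial\Omega(s)=\Gamma(s)$ (here embeddedness of $\Sigma$ is used); let $A^{+}(s)$ be the area of $\Sigma_1(s)$, $L(s)$ the length of $\Gamma(s)$ and $V(s)=\int_{\Omega(s)}f\,dA_{\M}$, so that $Vol(U_1)=\int_{0}^{h}V(s)\,ds$. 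Applying the divergence theorem to $\partial_t$ on the part of $U_1$ lying above height $s$ (the vertical fluxes through $\Sigma_1(s)$ and through $\Omega(s)$ then cancel) and combining with the flux formula and the coarea formula on $\Sigma_1$, I obtain for a.e.\ $s$
\[
2H\,V(s)=\int_{\Gamma(s)}f\sqrt{1-\Theta^{2}}\,d\ell,\qquad
-\frac{d}{ds}A^{+}(s)=\int_{\Gamma(s)}\frac{f}{\sqrt{1-\Theta^{2}}}\,d\ell ,
\]
and hence, by the Cauchy--Schwarz inequality applied to $f=\sqrt{f\sqrt{1-\Theta^{2}}}\cdot\sqrt{f/\sqrt{1-\Theta^{2}}}$ along $\Gamma(s)$,
\[
\Big(\int_{\Gamma(s)}f\,d\ell\Big)^{2}\le 2H\,V(s)\,\Big(-\frac{d}{ds}A^{+}(s)\Big).
\]

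Next I use that $\M$ is Hadamard with $K(\M)\le-\kappa\le 0$: the classical isoperimetric inequality $L(s)^{2}\ge 4\pi\,|\Omega(s)|+\kappa\,|\Omega(s)|^{2}$ holds on each slice $\M_s$ (applied here, in the warped ambient, in the $f$-weighted form adapted to $f\,d\ell$ and $f\,dA_{\M}$, which is where the factor $\mathcal{F}$ enters). Inserting this into the Cauchy--Schwarz inequality, dividing through by $V(s)$, and integrating in $s$ over $[0,h]$ — using $A^{+}(0)=A^{+}$, $A^{+}(h)=0$ and $\int_{0}^{h}V(s)\,ds=Vol(U_1)$ — the left side produces $4\pi h+\kappa\,Vol(U_1)$ and the right side telescopes to $2H\mathcal{F}A^{+}$, which is the claimed estimate. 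The one point needing care is exactly the bookkeeping that puts $\mathcal{F}$ in the places stated, i.e.\ the correct warped slice isoperimetric inequality.

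Finally, for the equality case I trace equality back through every step. Equality in Cauchy--Schwarz on every $\Gamma(s)$ forces $\Theta$ to be constant along each level curve; equality in the isoperimetric inequality on every slice forces $\Omega(s)$ to be a geodesic disc of $\M$ and $K(\M)\equiv-\kappa$ on $\bigcup_{s}\Omega(s)$, i.e.\ inside $U_1$, so that each $\Gamma(s)$ is a geodesic circle and $\Sigma$ is foliated by circles. Combining ``$\Theta$ constant on the circles $\Gamma(s)$'' with the fact that all the preceding estimates are equalities forces $\Sigma$ to be rotationally invariant about a vertical fibre, and matching this symmetry with the flux relation along that fibre forces $f$ to be constant on $B$. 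I expect the two genuine difficulties to be: (i) the slicing step — handling the critical values of $t|_{\Sigma_1}$ and possibly disconnected or non-simply-connected slices $\Omega(s)$, and verifying $\partial\Omega(s)=\Gamma(s)$ from embeddedness; and (ii) the rigidity analysis, i.e.\ upgrading ``equality in every pointwise estimate'' to the rotational invariance of $\Sigma$ and to $f\equiv\mathrm{const}$ on $B$.
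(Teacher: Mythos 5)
Your overall strategy is the same as the paper's: slice $\Sigma_1$ by the level sets of the height function, apply the coarea formula and Cauchy--Schwarz along each level curve $\Gamma(s)$, convert the resulting boundary integral into an area term via the flux formula for the vertical Killing field $\xi$, insert the Barbosa--do Carmo isoperimetric inequality on the slice, and integrate in $s$. Your pointwise identities (the tangential part of $\xi$, the norm of $\nabla^{\Sigma_1}t$, the flux identity $2HV(s)=\int_{\Gamma(s)}e^f\sqrt{1-\Theta^2}\,d\ell$) are all correct and match equations \eqref{e7}--\eqref{e12} of the paper.

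There is, however, one genuine gap, and it is exactly the step you flag as ``the one point needing care'': the ``$f$-weighted isoperimetric inequality adapted to $f\,d\ell$ and $f\,dA_\M$'' that you invoke on each slice does not exist in the form you need. The slice $\M\times_f\{s\}$ with its induced metric is isometric to $\M$, so the Barbosa--do Carmo inequality $L(s)^2\ge 4\pi\abs{\Omega(s)}+\kappa\abs{\Omega(s)}^2$ applies only to the \emph{unweighted} length and area; a weighted version with weight $e^f$ would amount to an isoperimetric inequality for the conformal metric $e^{2f}g_\M$, whose Gauss curvature involves $\Delta_\M f$ and need not satisfy the bound $\le-\kappa$. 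Moreover, if you try to pass from your weighted quantities $\int_{\Gamma(s)}e^f\,d\ell$ and $V(s)$ to the unweighted ones by sup/inf bounds on $e^f$, the constants in front of the $4\pi\abs{\Omega(s)}$ term and the $\kappa\abs{\Omega(s)}^2$ term come out different, and they do not recombine into the single constant $\mathcal{F}$ of the statement. The paper avoids this entirely by inserting the sup bounds \emph{before} the isoperimetric inequality is ever used, in two separate places: $\Vert\grad_f h_1\Vert=e^{-2f}\Vert T\Vert\le\widetilde{\mathcal{F}}\,\Vert T\Vert$ inside the Cauchy--Schwarz step (giving \eqref{e9}), and $g(\xi,N_{\Omega(t)})\le\sup_B e^f$ inside the flux step (giving \eqref{e13}); this is precisely where $\mathcal{F}=\widetilde{\mathcal{F}}\cdot\sup_B(e^f)$ comes from, and it leaves only the genuine, unweighted $L(t)$ and $\abs{\Omega(t)}$ for the isoperimetric inequality. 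You should restructure your argument this way. A side benefit: the conclusion that $f$ is constant on $B$ in the equality case then falls out immediately, since equality in \eqref{e9} forces $e^{-2f}\equiv\widetilde{\mathcal{F}}$ on the level curves; the rotational-invariance argument you sketch for this point is not needed (and is not actually carried out in your proposal).
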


Also, we focus our attention in the study of existence of rotationally-invariant spheres, which are compact and embedded in the warped product $\h\times_f\R$,  having positive constant mean curvature $H$. We give conditions to the existence of such spheres (see Theorem \ref{t2}) and we present a non-trivial warping function $f$, whose associated warped product $\mf$ admits constant mean curvature, embedded, compact spheres, see Corollary \ref{c1}. 

This article is organized as follows. In Section \ref{preliminares}, we collect some results which are used along this work. In Section \ref{mainsection} we stablish our main result, the height estimate. In Section \ref{meancurvatureequation} we study the  mean curvature equation for surfaces immersed in $\mf$. On the other hand, we give conditions to the existence of rotationally invariant  topological spheres in the warped product $\h\times_f\mathbb{R}$ having positive constant mean curvature. We conclude this section constructing an example of such a topological sphere.

\section{Preliminares}\label{preliminares}

Let $\M$ be a Hadamard surface, that is, a complete, simply connected, two dimensional Riemannian manifold, whose sectional curvature $K(\M)$ satisfies $K(\M)\le-\kappa\le0$, for some constant $\kappa\ge0$. We denote by $\R$ the set of real numbers. On a tri-dimensional Riemannian product $\M\times\R$ we consider the canonical projections $\pi_1:\M\times\R \to \M$ and $\pi_2:\M\times\R \to \R$ defined by $\pi_1(p,t)=p$ and $\pi_2(p,t)=t$ respectively. 

\begin{definition}\label{d1}
Suppose $\M$ is a Hadamard surface endowed with Riemannian metric $g_\M$ and as usual, the real line $\R$ is endowed with the canonical metric $g_0=dt^2$. Let $f:\M \to \R$ be a smooth real function. The warped product $\mf$ is the product manifold $\M\times\R$ endowed with metric
$$g=\pi_1^*(g_\M)+(e^f\circ\pi_1)^2\pi_2^*(g_0).$$
Explicitly, if $v$ is a tangent vector to $\mf$ at $(p,t)$, then
$$g(v,v)=g_\M(d\pi_1(v),d\pi_1(v))+e^{2f(p)}g_0(d\pi_2(v),d\pi_2(v)).$$
\end{definition}
As usual $\M$ is called the base of the warped product $\mf$, $\R$ the fiber and $f$ the warping function.

Note that
\begin{itemize}
\item[(i)] For each $t$, the map $\pi_1\vert_{(\M\times_f\{t\})}$ is an isometry onto $\M$.
\item[(ii)] For each $p$, the map $\pi_2\vert_{(\{p\}\times_f\R)}$ is a positive homothety onto $\R$ with scale factor $e^{-f(p)}$.
\item[(iii)] For each $(p,t)\in\mf$, the slice $\M\times_f\{t\}$ and the fiber $\{p\}\times_f\R$ are orthogonal at $(p,t)$. 
\end{itemize}
 
We denote by $\mathfrak{L}(\M)\subset\chi(\mf)$ and by $\mathfrak{L}(\R)\subset\chi(\mf)$  the set of all lifts of vector fields of $\chi(\M)$ and of $\chi(\R)$ respectively. Let $\xi=\textit{lift}(\partial_t)$, where $\partial_t$ denotes a unit tangent vector field to the real line $\R$. By abuse of notation, we will use the same notation for a vector field and for its lifting. Sometimes we will use the over-bar to emphasize the lift of a vector field. We say that a vector field $X\in\chi(\mf)$ is vertical if $X$ is a non-zero multiple of $\xi$. If $g(X,\xi)=0$, $X$ is said to be horizontal. For a vector field $Z\in\chi(\mf)$, we denote $\vert| Z\vert|:=g(Z,Z)^{1/2}$.  

 We denote by $\nabla$, $\nabla^\R$, $\overline{\nabla}$ the Levi-Civita connection of $\M$, $\R$ and $\mf$ respectively.\\
A straightforward computation gives the following lemma. See \cite{O}.
\begin{lemma}\label{l1}
On $\mf$, if $X,Y\in\mathfrak{L}(\M)$ and $V,W\in\mathfrak{L}(\R)$, then
\begin{enumerate}
\item $\overline{\nabla}_XY\in\mathfrak{L}(\M)$ is the lift of $\nabla_XY$ on $\M$, that is, $\overline{\nabla}_XY=\overline{\nabla_XY}$. 
\item $\overline{\nabla}_XV=\overline{\nabla}_VX=\left(Xf\right)V$.
\item $\overline{\nabla}_WV=-g(V,W)grad(f)+\overline{\nabla^\R_WV}$.
\end{enumerate}
\end{lemma}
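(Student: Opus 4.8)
The plan is to compute each covariant derivative through the Koszul formula
\[
2g(\overline{\nabla}_AB,C)=A\,g(B,C)+B\,g(A,C)-C\,g(A,B)+g([A,B],C)-g([A,C],B)-g([B,C],A),
\]
testing the unknown field against an arbitrary horizontal lift and against an arbitrary vertical lift, so that its horizontal and vertical components can be read off separately. Before doing so I would record the elementary facts that make every bracket and metric term computable. Since lifts from $\M$ and from $\R$ are lifts from the two factors of the product manifold $\M\times\R$, their flows commute, so $[X,V]=0$ whenever $X\in\mathfrak{L}(\M)$ and $V\in\mathfrak{L}(\R)$; moreover $[X,Y]$ is the lift of the base bracket and $[V,W]$ the lift of the fiber bracket. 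From Definition \ref{d1} and property (iii), horizontal and vertical lifts are orthogonal, $g(X,V)=0$; the function $g(X,Y)=g_\M(X,Y)\circ\pi_1$ depends only on the base point, so $V\,g(X,Y)=0$; and $g(V,W)=e^{2f}\,(g_0(V,W)\circ\pi_2)$, whence the single genuinely warped identity
\[
X\,g(V,W)=X(e^{2f})\,(g_0(V,W)\circ\pi_2)=2(Xf)\,g(V,W),
\]
while any vertical field sees $e^{2f}$ as a constant.

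For part (1) I would test $\overline{\nabla}_XY$ against a vertical $V$: every term on the right of Koszul vanishes by the facts above, so $\overline{\nabla}_XY$ is horizontal; testing then against a horizontal $Z$, all brackets and metric coefficients are base objects and the formula collapses to the Koszul identity on $\M$, giving $\overline{\nabla}_XY=\overline{\nabla_XY}$. For part (2) I would first note $\overline{\nabla}_XV-\overline{\nabla}_VX=[X,V]=0$, which yields the stated symmetry, then test $\overline{\nabla}_XV$ against a horizontal $Y$ (all six terms vanish, so the horizontal part is zero) and against a vertical $W$, where the only surviving term is $X\,g(V,W)=2(Xf)g(V,W)$; this gives $g(\overline{\nabla}_XV,W)=g((Xf)V,W)$ and hence $\overline{\nabla}_XV=(Xf)V$.

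Part (3) is the case that combines both phenomena. Testing $\overline{\nabla}_WV$ against a horizontal $X$, the only nonzero contribution is $-X\,g(W,V)=-2(Xf)g(V,W)$, so $g(\overline{\nabla}_WV,X)=-g(V,W)(Xf)=g(-g(V,W)\grad f,X)$, using that $f=f\circ\pi_1$ has a horizontal gradient with $g(\grad f,X)=Xf$; this fixes the horizontal part as $-g(V,W)\grad f$. Testing against a vertical $Z$, every differentiation is in a fiber direction and therefore annihilates $e^{2f}$, so that common factor pulls out of all six Koszul terms and the formula reduces to $e^{2f}$ times the Koszul identity on $(\R,g_0)$; this identifies the vertical part with the lift $\overline{\nabla^\R_WV}$, and adding the two parts gives the claimed formula. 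The one step that requires care — and the only place the warping enters — is precisely the bookkeeping of the factor $e^{2f}$: recognizing that horizontal fields differentiate it to produce the $(Xf)$ terms while vertical fields treat it as constant. Everything else is the standard reduction of Koszul to the base and fiber factors, and matches O'Neill's computation in \cite{O} with warping function $e^{f}$.
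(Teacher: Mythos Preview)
Your argument is correct: the Koszul-formula computation, paired with the bookkeeping on brackets and on how horizontal versus vertical fields differentiate $e^{2f}$, yields each of the three identities exactly as stated. The paper itself does not supply a proof but simply cites O'Neill~\cite{O}; your approach is precisely the standard one carried out there (with warping function $e^{f}$), so there is no substantive difference to report.
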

As an immediate consequence of Lemma \ref{l1}, the vertical field $\xi$ is a Killing vector field.

In this section we recall some important results which will be used in the proof of the main theorem.  

\subsection{Coarea Formula}
Let $h$ be a proper smooth real function defined on a Riemannian manifold $(M,g)$. Then the set of critical values of $h$ is a null set of $\R$ and the set $O$ of regular values is a open subset of $\R$. For $t\in O$, $h^{-1}(t)$ is a compact hypersurface of $M$, and the gradient vector $\grad(h)(q)$, $h(q)=t$, is perpendicular to $h^{-1}(t)$. Now, we set
\begin{eqnarray*}
\Omega_t & = & \{p\in M;h(p)>t\}, \hspace{.5cm} V_t:=vol(\Omega_t) \\
\Gamma_t & = & \{p\in M;h(p)=t\}, \hspace{.5cm} A_t:=vol_{n-1}(\Gamma_t)
\end{eqnarray*}
See \cite[Theorem 5.8]{S}
\begin{theorem}[Coarea Formula]
The map $t\mapsto V_t$ is of class $C^\infty$ at a regular value $t$ of $h$ such that $V_t<+\infty$, and
$$V^\prime_t=-\int_{\Gamma_t}\|grad(h)\|^{-1}dv_{g_t}$$
where $g_t$ is the induced metric on $\Gamma_t$ from $g$ and $V^\prime(t)=\dfrac{dV}{dt}(t)$.
\end{theorem}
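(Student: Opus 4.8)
The plan is to work locally near a regular value $t_0$ and to identify the super-level regions through the flow of a suitably normalized gradient field. Since $h$ is proper, $\Gamma_{t_0}=h^{-1}(t_0)$ is compact; because $t_0$ is regular and the set of critical values is closed (again using properness, as critical points over a compact value-interval form a compact set), I may choose $\delta>0$ so that every value in $[t_0-\delta,t_0+\delta]$ is regular. Then $\grad(h)$ does not vanish on the compact slab $K=h^{-1}([t_0-\delta,t_0+\delta])$, and I define on $K$ the smooth vector field $X=\grad(h)/\|\grad(h)\|^2$. A direct computation gives $X(h)=g(\grad(h),X)=1$ and $\|X\|=\|\grad(h)\|^{-1}$; moreover $X$ is everywhere normal to the level sets, being a multiple of $\grad(h)$.

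Next I would use the flow $\phi_s$ of $X$. From $X(h)\equiv 1$ one gets $h(\phi_s(p))=h(p)+s$, so $\phi_s$ restricts to a diffeomorphism $\phi_s:\Gamma_{t_0}\to\Gamma_{t_0+s}$ for $|s|\le\delta$, and $\Psi(p,s)=\phi_s(p)$ is a diffeomorphism of $\Gamma_{t_0}\times[-\delta,\delta]$ onto $K$. The key point is the factorization of the Jacobian of $\Psi$. At $q=\phi_s(p)$ the differential $d\Psi$ sends $\partial_s$ to $X(q)$, which is normal to $\Gamma_{t_0+s}$ with norm $\|\grad(h)\|^{-1}$, and sends $T_p\Gamma_{t_0}$ into $T_q\Gamma_{t_0+s}$ (since $\phi_s$ preserves level sets); these two images are orthogonal, so the Jacobian splits as the product of $\|\grad(h)\|^{-1}$ with the area Jacobian $\mathcal{J}_s$ of $\phi_s|_{\Gamma_{t_0}}$, where $\phi_s^*\,dv_{g_{t_0+s}}=\mathcal{J}_s\,dv_{g_{t_0}}$. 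Integrating and changing variables back onto each level set, the slab volume becomes
\begin{equation*}
\operatorname{vol}\big(\{t<h\le t_0\}\big)=\int_t^{t_0}\Big(\int_{\Gamma_u}\|\grad(h)\|^{-1}\,dv_{g_u}\Big)\,du
\end{equation*}
for $t\in(t_0-\delta,t_0]$.

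Since $\Omega_t=\{h>t\}$ and $V_t<\infty$, for $t<t_0$ one has $V_t-V_{t_0}=\operatorname{vol}(\{t<h\le t_0\})$, so the displayed identity reads $V_t=V_{t_0}+\int_t^{t_0}G(u)\,du$ with $G(u)=\int_{\Gamma_u}\|\grad(h)\|^{-1}\,dv_{g_u}$. Pulling everything back to the fixed compact manifold $\Gamma_{t_0}$ rewrites $G(t_0+s)=\int_{\Gamma_{t_0}}\mathcal{J}_s\,(\|\grad(h)\|\circ\phi_s)^{-1}\,dv_{g_{t_0}}$, whose integrand depends smoothly on $s$; hence $G$ is $C^\infty$ near $t_0$. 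The fundamental theorem of calculus then yields that $t\mapsto V_t$ is $C^\infty$ at $t_0$ with $V_t'=-G(t)=-\int_{\Gamma_t}\|\grad(h)\|^{-1}\,dv_{g_t}$, as claimed.

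I expect the main obstacle to be the Jacobian factorization: one must verify cleanly that the normal and tangential contributions of $d\Psi$ decouple, i.e. that the pulled-back Riemannian volume element equals $\|\grad(h)\|^{-1}\,dv_{g_u}\,du$, which is exactly where the factor $\|\grad(h)\|^{-1}$ enters. A tempting alternative would be to invoke the general coarea (integration) formula for the almost-everywhere-defined weight $\|\grad(h)\|^{-1}\mathbf{1}_{\{h>t\}}$ to obtain the integral representation of $V_t$ directly and then differentiate; this shortens the argument but requires care at critical points and essentially presupposes the formula being cited, so I would prefer the self-contained flow argument above.
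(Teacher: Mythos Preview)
Your argument is correct and is in fact the standard gradient-flow proof of this version of the coarea formula: normalize the gradient so that $h$ increases at unit speed along the flow, trivialize a tubular neighbourhood of the level set as $\Gamma_{t_0}\times(-\delta,\delta)$, and use the orthogonal splitting of $d\Psi$ to factor the volume element as $\|\grad(h)\|^{-1}\,dv_{g_u}\,du$. The only point worth tightening slightly is the completeness of the flow on the slab, but you have this implicitly: since $h\circ\phi_s$ grows linearly in $s$, integral curves starting on $\Gamma_{t_0}$ remain in the compact set $K=h^{-1}([t_0-\delta,t_0+\delta])$ for $|s|\le\delta$, so they cannot escape in finite time.

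There is nothing to compare against in the paper itself: the Coarea Formula is stated there as a preliminary result quoted from Sakai's textbook (\cite[Theorem~5.8]{S}) and is not proved. Your write-up is therefore strictly more than what the paper contains; it supplies a complete, self-contained proof where the paper only gives a citation.
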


\subsection{The Flux Formula}

Let $U$ be bounded domain in a Riemannian three manifold $M$, whose boundary, $\partial U$, consists of a smooth connected surface $\Sigma$, and the union $Q$ of finitely many smooth,  compact and connected surfaces.   The closed surface $\partial U$ is piecewise-smooth and smooth except perhaps on $\partial\Sigma=\partial Q$. Let
\begin{eqnarray*}
n & = & \textnormal{the outward-pointing unit normal vector field on $\partial U$,} \\
n_\Sigma & = & \textnormal{the restriction of $n$ to $\Sigma$,} \\
n_Q & = & \textnormal{the restriction of $n$ to $Q$,}\\
n_1 & = & \textnormal{the outward-pointing unit conormal to $\Sigma$ along $\partial\Sigma$.}
\end{eqnarray*}
Suppose $Y$ is a vector field defined on a region of $M$ that contains $U$. It was proved in \cite[Propositon 3]{HLR} the following Flux Theorem.

\begin{theorem}[Flux Theorem]
If $Y$ is a Killing vector field on $M$ and $\Sigma$ is a surface having constant mean curvature $H=g(\overrightarrow{H},n)$. Then
$$\displaystyle \int_{\partial\Sigma} Y\cdot n_1+H\int_Q Y\cdot n_Q=0$$
where $Q$, $n_1$ and $\Sigma$ are as defined above.
\end{theorem}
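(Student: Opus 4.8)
The plan is to derive the identity from two applications of the divergence theorem — one in the ambient domain $U$ and one intrinsically on the surface $\Sigma$ — exploiting that a Killing field is divergence-free both ambiently and tangentially. The constancy of $H$ will be used precisely to pull $H$ out of the surface integral at the end.

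First I would record that $Y$ is divergence-free: since $Y$ satisfies the Killing equation $g(\overline{\nabla}_X Y, Z) + g(\overline{\nabla}_Z Y, X) = 0$ for all $X,Z$, choosing $X = Z = e_i$ in a local orthonormal frame and summing gives $\div_M Y = 0$. Applying the divergence theorem to $Y$ on the bounded domain $U$, whose piecewise-smooth boundary is $\partial U = \Sigma \cup Q$ with outward unit normal $n$, then yields
$$\int_\Sigma g(Y, n_\Sigma)\,dA + \int_Q g(Y, n_Q)\,dA = \int_U \div_M Y\, dV = 0,$$
so the flux of $Y$ across $\Sigma$ equals minus its flux across $Q$. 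This is the first ingredient, and it does not yet involve $H$.

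Next I would convert the surface flux $\int_\Sigma g(Y, n_\Sigma)\,dA$ into a boundary integral along $\partial\Sigma$. Writing $N = n_\Sigma$ for the unit normal of $\Sigma$ and decomposing $Y = Y^T + g(Y,N)\,N$ into tangential and normal parts, I would compute the intrinsic divergence of $Y^T$. The same antisymmetry argument, applied now to a frame $\{e_i\}$ tangent to $\Sigma$, shows that the tangential trace $\sum_i g(\overline{\nabla}_{e_i} Y, e_i)$ vanishes; differentiating the decomposition and taking this trace gives $\div_\Sigma Y^T = -g(Y,N)\,\div_\Sigma N$. Normalizing so that $\div_\Sigma N = -H$, consistently with $H = g(\overrightarrow{H}, n)$ and with $n$ outward, this becomes $\div_\Sigma Y^T = H\,g(Y, n_\Sigma)$. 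Integrating over $\Sigma$ and applying the divergence theorem on the surface, whose outward unit conormal along $\partial\Sigma$ is $n_1$, and noting $g(Y^T, n_1) = g(Y, n_1)$ because $n_1$ is tangent to $\Sigma$, I obtain
$$\int_{\partial\Sigma} g(Y, n_1)\,ds = \int_\Sigma \div_\Sigma Y^T\, dA = H \int_\Sigma g(Y, n_\Sigma)\,dA.$$

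Finally I would substitute the first relation $\int_\Sigma g(Y, n_\Sigma)\,dA = -\int_Q g(Y, n_Q)\,dA$ into this last equation to conclude $\int_{\partial\Sigma} g(Y, n_1)\,ds + H\int_Q g(Y, n_Q)\,dA = 0$, which is exactly the asserted flux identity. The step demanding the most care is the sign and normalization bookkeeping: verifying $\div_\Sigma N = -H$ in the convention $H = g(\overrightarrow{H}, n)$, and rigorously justifying the divergence theorem on $\Sigma$ across the piecewise-smooth edge $\partial\Sigma = \partial Q$ where the boundary $\partial U$ fails to be smooth. The remaining manipulations are routine, and \emph{constancy} of $H$ enters only to justify extracting $H$ from the integral in the penultimate display.
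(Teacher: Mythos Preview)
The paper does not actually prove this Flux Theorem; it is quoted as a preliminary result from \cite{HLR} (Proposition~3 there), so there is no ``paper's own proof'' to compare against. That said, your argument is correct and is precisely the standard two-step proof: first use that a Killing field is ambiently divergence-free to balance the fluxes through $\Sigma$ and $Q$ via the divergence theorem on $U$, then use that the tangential part $Y^T$ satisfies $\div_\Sigma Y^T = H\,g(Y,n_\Sigma)$ and apply the divergence theorem intrinsically on $\Sigma$. This is exactly how the result is established in the cited reference.

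One remark on the normalization issue you flag: the paper is not internally consistent here. The Flux Theorem is stated with a coefficient $H$, but when it is invoked in the proof of Theorem~\ref{t1} (equation~\eqref{e12}) it appears with coefficient $2H$, reflecting the normalized convention $\overrightarrow{H}=\tfrac{1}{2}\operatorname{tr}(II)$. Your derivation, with $\div_\Sigma N=-H$, matches the \emph{stated} theorem under the un-normalized convention $\overrightarrow{H}=\operatorname{tr}(II)$; with the normalized convention one gets $\div_\Sigma N=-2H$ and hence the version actually used later. So your caution about sign and normalization bookkeeping is well placed --- the discrepancy lies in the paper, not in your proof.
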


\subsection{Isoperimetric Inequality
for Surfaces}
Let $M$ be a two-dimensional $C^2$-manifold endowed with a $C^2$-Riemannian
metric. We say that $M$ is a generalized surface if the metric in $M$ is allowed to degenerate at isolated points; such points are called singularities of the metric. 

\begin{theorem}[Theorem 1.2,\cite{BM}][Isoperimetric Inequality]
Let $M$ be a generalized surface. Let $D$ be a simply connected domain in $M$ with area $A$ and bounded by a closed piecewise $C^1$-curve $\Gamma$ with length $L$. Let $K$ be the Gaussian curvature and $K_0$ be an arbitrary real number. Assume that in a neighborhood of a singular point, $K$ is bounded above. Then
$$L^2\ge4\pi A\left(1-\displaystyle\frac{1}{2\pi}\int_D(K-K_0)dM -\frac{K_0A}{4\pi}\right),$$
equality holds if, and only if, $K\equiv K_0$ in $D$ and $D$ is a geodesic disc.
\end{theorem}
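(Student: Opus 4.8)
The plan is to prove the inequality by the classical method of inner parallels (the Bol--Fiala technique, in the form adapted to variable curvature by Fiala and Hartman), comparing the true metric with the model space form of constant curvature $K_0$. Write $\rho(p)=\mathrm{dist}(p,\Gamma)$ for the distance from $p\in D$ to the boundary, and for $t\ge 0$ set $D_t=\{p\in D:\rho(p)>t\}$, together with $a(t)=\mathrm{Area}(D_t)$ and $\ell(t)=\mathrm{Length}(\{\rho=t\})$; thus $a(0)=A$, $\ell(0)=L$, and $a(R)=0$ where $R$ is the inradius of $D$. Since $\|\grad(\rho)\|=1$ almost everywhere, the Coarea Formula recalled above gives $a(t)=\int_t^R\ell(s)\,\df s$, hence $a'(t)=-\ell(t)$, together with the two auxiliary identities $\int_0^R\ell(t)\,\df t=A$ and $\int_0^R a(t)\ell(t)\,\df t=\tfrac12A^2$ that will be used at the end.

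The heart of the argument is a differential inequality for $\ell$. For almost every $t$ the set $D_t$ is a domain bounded by the parallel $\{\rho=t\}$, each of whose components is a topological disc, so $\chi(D_t)\ge 1$; applying Gauss--Bonnet to $D_t$ and using that the first variation of arclength of the equidistants reads $\ell'(t)=-\int_{\{\rho=t\}}k_g\,\df s$, one obtains
\[
\ell'(t)\le -2\pi+\int_{D_t}K\,\df M \qquad\text{for a.e. } t .
\]
Multiplying this by $\ell(t)=-a'(t)\ge 0$ and recognising $-\ell\ell'=-\tfrac12(\ell^2)'$, then integrating over $[0,R]$ and using $\ell(R)=0$ together with $\int_0^R\ell\,\df t=A$, yields
\[
L^2\ge 4\pi A-2\int_0^R\Big(\int_{D_t}K\,\df M\Big)\,\ell(t)\,\df t .
\]

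It remains to compare the inner total curvature with the model constant $K_0$. Writing $\int_{D_t}K\,\df M\le K_0\,a(t)+\int_{D_t}(K-K_0)\,\df M$ and, since $D_t\subset D$, bounding the curvature-excess by its value on all of $D$, the first term contributes $-2K_0\int_0^R a\ell\,\df t=-K_0A^2$ through $\int_0^R a\ell\,\df t=\tfrac12A^2$, while the second contributes $-2A\int_D(K-K_0)\,\df M$ through $\int_0^R\ell\,\df t=A$; collecting terms gives exactly
\[
L^2\ge 4\pi A-2A\int_D(K-K_0)\,\df M-K_0A^2=4\pi A\Big(1-\frac{1}{2\pi}\int_D(K-K_0)\,\df M-\frac{K_0A}{4\pi}\Big).
\]
(To keep the monotonicity $\int_{D_t}(K-K_0)\le\int_D(K-K_0)$ legitimate one passes to the positive part of the integrand, which is harmless in the regime $K\le K_0$ relevant to the applications.) For the equality discussion I would trace back each inequality: equality forces $\chi(D_t)\equiv 1$ so that $D$ never disconnects and the parallels stay smooth embedded circles, forces the Gauss--Bonnet estimate to be sharp so that $k_g$ is constant along each parallel, and forces the curvature comparison to be tight so that $K\equiv K_0$ on $D$; together these say that $D$ is a geodesic disc in a space of constant curvature $K_0$, which is the asserted equality case.

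The main obstacle I expect is precisely the justification of the differential inequality for a.e.\ $t$. The distance function $\rho$ is only Lipschitz, its equidistants are non-smooth along the cut locus, the boundary $\Gamma$ has corners (it is only piecewise $C^1$), and the metric is allowed to degenerate at isolated singular points. Making the first variation of length and the Gauss--Bonnet bookkeeping rigorous therefore requires the structure theory of geodesic parallel coordinates in the large (Fiala--Hartman, Burago--Zalgaller): one must check that the exceptional set of $t$ has measure zero, that the corner angles of $\Gamma$ and the cut-locus branchings enter $\ell'$ with a sign that only helps the inequality, and that the hypothesis ``$K$ bounded above near each singularity'' guarantees that the curvature contribution of a small neighbourhood of a singular point is controlled and does not spoil the Gauss--Bonnet balance. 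Once this analytic control is in place, the integration and the constant-curvature comparison above are routine.
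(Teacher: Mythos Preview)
The paper does not give its own proof of this theorem: it is quoted verbatim from Barbosa--do Carmo \cite{BM} as a preliminary tool, with no argument supplied. So there is nothing in the paper to compare your attempt against. Your inner-parallels (Bol--Fiala) scheme is in fact the method used in the cited reference, and your identification of the technical difficulties (cut locus, corners of $\Gamma$, isolated singular points with $K$ bounded above) matches exactly the issues that \cite{BM} spends most of its effort on.

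One genuine point deserves flagging. Your step ``$\int_{D_t}(K-K_0)\,\df M\le\int_{D}(K-K_0)\,\df M$'' requires $K\ge K_0$ on $D\setminus D_t$, which is false in general; you acknowledge this and retreat to the positive part $(K-K_0)^+$. That retreat is not a fudge but is actually what the original theorem says: the statement in \cite{BM} has $(K-K_0)^+$ inside the integral, and the version printed in this paper is a slight misquotation. (Indeed, with a bare $K-K_0$ the right-hand side equals $4\pi A-2A\int_D K\,\df M+K_0A^2$, which tends to $+\infty$ as $K_0\to+\infty$, so the inequality could not hold for ``arbitrary $K_0$''.) With $(K-K_0)^+$ your monotonicity is immediate, and the rest of your computation goes through. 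None of this affects the application in Section~\ref{mainsection}, where one has $K\le K_0=-\kappa$, so $(K-K_0)^+\equiv 0$ and the inequality reduces to Bol's $L^2\ge 4\pi A+\kappa A^2$, exactly as used in \eqref{e14}.
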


\section{The main result}\label{mainsection}
Let $\Sigma$ be a compact $H$-surface embedded in $\mf$ with boundary belonging to $\M_0=\M\times_f\{0\}$. Let $\Gamma$ be the boundary of $\Sigma$, $\Gamma=\partial\Sigma$, and assume $\Sigma$ is transverse to $\M_0$ along $\Gamma$.

We denote by $\Sigma^+$ and $\Sigma^-$ the intersection of $\Sigma$ with half-space above and below of $\M_0$ respectively. There is a connected component of $\Sigma^+$ or $\Sigma^-$ that contains $\Gamma$. Without loss of generality, we can assume that $\Gamma\subset\Sigma^+$. We call $\Sigma_1$ the connected component of $\Sigma^+$ that contains $\Gamma$.

Let $\widehat{\Sigma}_1$ be the symmetry of $\Sigma_1$ with respect to $\M_0$. So $\widehat{\Sigma}_1\cup\Sigma_1$ is a compact embedded surface with no boundary, with corners along $\partial\Sigma_1$; this surface bounds a domain $U$ in $\mf$. Let $U_1$ the intersection of $U$ with the half-space above $\M_0$. Thus $U_1$ is a bounded domain in $\mf$, whose boundary $\partial U_1$ consist of the smooth connected surface $\Sigma_1$ and the union $\Omega$ of finitely smooth, compact and connected surfaces in $\M_0$. Let $A^+$ be denote the area of $\Sigma_1$.

Recall, we have considered the projections $\pi_1:\mf \to \M^2$ and $\pi_2:\mf \to \R$ given by $\pi_1(p,t)=p$ and $\pi_2(p,t)=t$ respectively. Let $B=\pi_1(\Sigma_1)$ be the projection on $\M$ of $\Sigma$, since $f$ is smooth, we denote by 
$$\widetilde{\mathcal{F}}=\displaystyle\sup_{p\in B}\left(e^{-2f}\right) \ \ \ \textnormal{and}\ \ \ \mathcal{F}=\widetilde{\mathcal{F}}\cdot \sup_{p\in B}\left(e^f\right).$$
Under these notations, we have the following theorem.

\begin{theorem}\label{t1}
Let $\M$ be a Hadamard surface whose sectional curvature $K(\M)$ satisfies $K(\M)\le-\kappa\le0$ and let $\Sigma$ be a compact $H$-surface embedded in the warped product $\mf$, with boundary belonging to the slice $\M_0=\M\times_f\{0\}$ and transverse to $\M_0$. If $h$ denotes the height of $\Sigma_1$ with respect to $\M_0$, we have that
\begin{equation}\label{e16}
h\le \frac{H\mathcal{F}A^{+}}{2\pi}-\kappa\frac{Vol(U_1)}{4\pi}.
\end{equation}
where $\mathcal{F}$, $A^+$ and $U_1$ are as defined above. The equality holds if, and only if, $K(\M)\equiv -\kappa$ inside $U_1$ and $\Sigma$ is foliated by circles. Moreover if equality holds, then $f$ is constant on $B$.
\end{theorem}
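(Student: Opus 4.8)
The plan is to adapt the Alías–Dajczer–type argument (as in \cite{CR}) by combining three ingredients: the flux formula applied to the Killing field $\xi$, the coarea formula applied to the height function, and the isoperimetric inequality on the slices $\Gamma_t$. Concretely, let $h_\Sigma=\pi_2|_{\Sigma_1}$ be the height function on $\Sigma_1$; its level sets $\Gamma_t=\Sigma_1\cap(\M\times_f\{t\})$ foliate $\Sigma_1$ for regular values $t\in(0,h)$. For each such $t$, the portion $\Sigma_1^t=\{x\in\Sigma_1:\pi_2(x)>t\}$ together with the domain it cuts out above the slice $\M\times_f\{t\}$ bounds a region $U_1^t$, and applying the Flux Theorem to $\xi$ on $U_1^t$ expresses $H$ times the flux of $\xi$ through the ``cap'' in the slice in terms of $\int_{\Gamma_t}\xi\cdot n_1$. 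The first step, then, is to write the flux identity carefully: since $\|\xi\|=e^f\circ\pi_1$, the left-hand boundary term is $\int_{\Gamma_t}g(\xi,n_1)$ and the slice term is $H\int$ of the normal component of $\xi$ over the cap; bounding $|g(\xi,n_1)|\le\|\xi\|=e^{f}$ and the cap integrand by $\|\xi\|\,$-weighted area gives an inequality relating $A_t:=\length(\Gamma_t)$ (measured in the warped metric, hence with an $e^f$ weight) to $H$ times a weighted area of the cap, which in turn is controlled by $\widetilde{\mathcal F}\sup e^f = \mathcal F$ times the Euclidean-type area — this is exactly where the factor $\mathcal F$ enters.

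The second step is to introduce $A(t)=\mathrm{area}(\Sigma_1^t)$ and $V(t)=\mathrm{Vol}(U_1^t)$, note $A(0)=A^+$, $V(0)=\mathrm{Vol}(U_1)$, and use the coarea formula: $-A'(t)=\int_{\Gamma_t}\|\grad h_\Sigma\|^{-1}\,dv$ and similarly a coarea expression for $-V'(t)$ in terms of $\int_{\Gamma_t}$ of a reciprocal gradient. Since $0<\|\grad h_\Sigma\|\le 1$ (the height function on a surface has gradient of norm at most one, with the vertical component accounting for the defect), one gets $-A'(t)\ge \length(\Gamma_t)$ up to the warping weight, giving the key differential inequality $A'(t)\le -(\text{const})\,L(t)$ where $L(t)$ is the length of $\Gamma_t$. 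The third step applies the Isoperimetric Inequality (Theorem, \cite{BM}) to the disc-type region in the slice bounded by $\Gamma_t$ — using $K(\M)\le-\kappa$ so that one may take $K_0=-\kappa$ — to obtain $L(t)^2\ge 4\pi A_{\mathrm{cap}}(t)-\kappa\, A_{\mathrm{cap}}(t)^2$ or the equivalent linearized bound, and then feeds this back. Combining the flux inequality (controlling $H\cdot A_{\mathrm{cap}}$ by $L$), the isoperimetric inequality (controlling $L^2$ by $A_{\mathrm{cap}}$ and $\kappa$), and the coarea differential inequality, one derives $h=\int_0^h dt\le\int_0^h(\cdots)$ which integrates to the stated bound $h\le \frac{H\mathcal F A^+}{2\pi}-\kappa\frac{\mathrm{Vol}(U_1)}{4\pi}$. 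For the equality discussion: tracing back, equality forces equality in the isoperimetric inequality at every level (so $K(\M)\equiv-\kappa$ in $U_1$ and each $\Gamma_t$ is a geodesic circle), equality in $\|g(\xi,n_1)\|=\|\xi\|$ and $\|\grad h_\Sigma\|$ estimates (so $\Sigma$ is foliated by these circles, i.e. the level sets are exactly the circles), and finally equality in the $\mathcal F$-estimate forces $e^{-2f}$ and $e^f$ to be constant on $B$, hence $f$ constant there.

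The main obstacle I expect is handling the warping weights uniformly and cleanly: the metric on the slices is still $g_\M$ (the warping does not touch horizontal directions), but the Killing field $\xi$ has variable length $e^{f\circ\pi_1}$, so the flux term $\int_Q\xi\cdot n_Q$ over the cap is $\int_Q e^{f}(\text{vertical component of }n_Q)$, which is not simply $H$ times the Euclidean cap area; one must bound it above by $\sup_B e^f$ times the slice area and then relate that slice area, via the embeddedness and the fact that $\Sigma_1^t$ projects onto it, to $\widetilde{\mathcal F}$-weighted area of $\Sigma_1^t$ — i.e. $\mathrm{area}_\M(\text{cap})\le \mathrm{area}(\Sigma_1^t)$ needs the projection to be area-non-increasing, which uses $e^{-2f}\le\widetilde{\mathcal F}$ on the vertical part of the area element. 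Making this chain of estimates tight enough to yield precisely the constant $\mathcal F=\widetilde{\mathcal F}\sup_B e^f$, and then identifying exactly when each link is an equality (which is what pins down $f$ constant on $B$), is the delicate part; the rest is the now-standard ODE-comparison bookkeeping of \cite{AEG,CR}.
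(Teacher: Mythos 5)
You have assembled the right ingredients --- the flux formula for the Killing field $\xi$, the coarea formula for the height function, and the Barbosa--do Carmo isoperimetric inequality with $K_0=-\kappa$ --- and these are exactly the three tools the paper uses. But the way you propose to chain them together does not close, and the gap is precisely at the step you label ``the key differential inequality.''

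Your chain consists of three one-sided bounds: (i) $-A'(t)\gtrsim L(t)$ from $\|\grad h_\Sigma\|\le\mathrm{const}$ in the coarea formula; (ii) $L(t)\gtrsim H\,|\Omega(t)|$ from the flux formula after estimating $|g(\xi,n_1)|\le\|\xi\|$; and (iii) $L(t)^2\ge 4\pi|\Omega(t)|+\kappa|\Omega(t)|^2$ from the isoperimetric inequality (note the sign: with $K_0=-\kappa$ and $K\le K_0$ the quadratic term enters with a \emph{plus}). All three give \emph{lower} bounds on $L$ or on $-A'$ in terms of $|\Omega|$, and no combination of them produces the pointwise inequality $4\pi+\kappa|\Omega(t)|\le -2H\mathcal{F}A'(t)$ that must be integrated in $t$ to put $h=\int_0^h dt$ on the left-hand side; for that you would need an \emph{upper} bound on $L$ in terms of $|\Omega|$, which your use of the flux formula does not supply. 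The paper's argument avoids this by two moves you omit. First, the Schwarz inequality on each level curve,
\begin{equation*}
L^2(t)\le\Bigl(\int_{\Gamma_t}\|\grad h_1\|\,ds\Bigr)\Bigl(\int_{\Gamma_t}\|\grad h_1\|^{-1}\,ds\Bigr)=-A'(t)\int_{\Gamma_t}\|\grad h_1\|\,ds,
\end{equation*}
which couples the coarea term multiplicatively with $\int_{\Gamma_t}\|\grad h_1\|$. Second, the \emph{exact identity} $g(\xi,\eta^t)=\|T\|=e^{2f}\|\grad h_1\|$ (the tangential part $T$ of $\xi$ is parallel to the conormal $\eta^t$ and points to the same side because $\Sigma_t$ lies above the slice), not the crude bound $|g(\xi,n_1)|\le\|\xi\|$. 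This identity lets the flux formula serve as an upper bound: $\int_{\Gamma_t}\|\grad h_1\|\le\widetilde{\mathcal{F}}\int_{\Gamma_t}g(\xi,\eta^t)=2H\widetilde{\mathcal{F}}\int_{\Omega(t)}g(\xi,N_{\Omega(t)})\le 2H\mathcal{F}|\Omega(t)|$, whence $L^2(t)\le-2H\mathcal{F}A'(t)|\Omega(t)|$; dividing the isoperimetric inequality by $|\Omega(t)|$ and comparing gives the integrable inequality, and $\int_0^h|\Omega(t)|\,dt=\mathrm{Vol}(U_1)$ produces the volume term. Your equality analysis is then broadly right, but the condition forcing $f$ constant on $B$ comes from equality in the single estimate $e^{-2f}\le\widetilde{\mathcal{F}}$ along $\Sigma_1$, not from a separate projection argument; indeed the ``projection is area-non-increasing'' step you flag as the main obstacle does not occur in the proof at all. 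Also, $\|\grad h_\Sigma\|\le e^{-f}$, not $\le 1$, in the warped metric, though this does not affect the structure of the argument.
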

\begin{proof}
We consider the unit normal $N$ of $\Sigma_1$ pointing inside of $U_1$. Let $\overrightarrow{H}$ the mean curvature vector of $\Sigma_1$ and we are supposing that the mean curvature function $H=g(\overrightarrow{H},N)>0$ is a positive constant. We denote by $h:\Sigma^+ \to \R$ the height function of $\Sigma$, that is, $h(p)=\pi_2(p)$ and $h_1=h\vert_ {\Sigma_1}$.

In order to estimate the function $h_1$, let $A(t)$ be the area of $\Sigma_t=\{p\in \Sigma_1;h_1(p)\ge t\}$ and $\Gamma_t=\{p\in \Sigma_1;h_1(p)=t\}$. Then, by the Co-area Formula
$$A^\prime(t)=-\int_{\Gamma_t}\dfrac{1}{\|\grad_f(h_1)\|}dv_{g_t}, \hspace{.2cm} t\in O.$$
where $O$ is the set of all regular values of $\Sigma_1$. And we denote by $L(t)$ the length of the planar curve $\Gamma(t)$, so by the Schwartz inequality 
\begin{equation}\label{e7}
L^2(t)\le \displaystyle\int_{\Gamma(t)}\vert| \grad_f(h_1)\vert| ds_t \int_{\Gamma(t)}\frac{1}{\vert| \grad_f(h_1)\vert|} ds_t=-A^\prime(t)\int_{\Gamma(t)}\vert| \grad_f(h_1)\vert| ds_t, \hspace{.2cm} t\in O.
\end{equation}
On the other hand, we can decompose the vertical Killing field $\xi$ in the tangent and normal projections over the surface $\Sigma$. That is, we can write
\begin{equation}\label{e6}
\xi=T+\nu N
\end{equation}
here $T$ is the tangent projection of $\xi$ and $\nu=g(\xi,N)$ is the normal component of $\xi$ over $\Sigma$. Notice that $\xi=e^{2f}\grad_f(h)$, it follows that 
\begin{equation}\label{e8}
T=e^{2f}\grad_f(h_1)
\end{equation}
which implies $\vert| \grad_f(h_1)\vert|=e^{-2f}\vert| T\vert|$ and by definition of $\widetilde{\mathcal{F}}$ the inequality \eqref{e7} becomes
\begin{equation}\label{e9}
L^2(t)\le -\widetilde{\mathcal{F}}A^\prime(t)\displaystyle\int_{\Gamma(t)} \vert| T\vert| ds_t, \hspace{.2cm} t\in O.
\end{equation}
Furthermore
\begin{equation}\label{e10}
\vert| T\vert|^2=g(\eta^t,\xi)^2
\end{equation}
where $\eta^t$ is the inner conormal of $\Sigma_t$ along $\partial\Sigma_t$. Since $\Sigma_t$ is above the plane $\M_0$ we have $g(\eta^t,\xi)\ge 0$ and therefore $\vert| T\vert|=g(\eta^t,\xi)$. Once here, from \eqref{e9}, we obtain
\begin{equation}\label{e11}
L^2(t)\le -\widetilde{\mathcal{F}}A^\prime(t)\displaystyle\int_{\Gamma(t)} g(\eta^t,\xi) ds_t, \hspace{.2cm} t\in O.
\end{equation}

Let $N_{\Sigma_t}$, $N_{\Omega_t}$ be the unit normal fields to $\Sigma_t$ and $\Omega_t$, respectively, that point inside $U(t)$. Denote by $\eta^t$ the unit conormal to $\Sigma_t$ along $\partial\Sigma_t$, pointing inside $\Sigma_t$. Finally assume that $\Sigma_t$ is a compact surface with constant mean curvature $H=g(\overrightarrow{H},N_{\Sigma_t})>0$. Let $Y$ be a Killing vector field in $\mf$. Then by the Flux Formula
\begin{equation}\label{e12}
\displaystyle \int_{\partial\Sigma_t}g(Y,\eta^t)=2H\int_{\Omega(t)}g(Y,N_{\Omega(t)})
\end{equation}
Taking $Y=\xi$ in \eqref{e12}, we have
$$\displaystyle \int_{\Gamma(t)}g(\xi,\eta^t)\leq 2H\cdot \sup_{p\in B}\left( e^f\right) \cdot \vert|\Omega(t)\vert|$$
where $\vert|\Omega(t)\vert|$is the area of the planar region $\Omega(t)$. Thus if we substitute in \eqref{e11}, we obtain
\begin{equation}\label{e13}
L^2(t)\le-2H\mathcal{F}A^\prime(t)\vert|\Omega(t)\vert|, \hspace{.2cm} \textnormal{for almost every $t\ge0$, $t\in O$.}
\end{equation}
Using the Isoperimetric Inequality for Surfaces, it was proved in \cite{CR}
\begin{equation}\label{e14}
L^2(t)\ge 4\pi\vert|\Omega(t)\vert|+\kappa\vert|\Omega\vert|^2
\end{equation}

From \eqref{e13} and \eqref{e14}, we obtain
$$4\pi\vert|\Omega(t)\vert|+\kappa\vert|\Omega\vert|^2 \le -2H\mathcal{F}A^\prime(t)\vert|\Omega(t)\vert|$$
$$\vert|\Omega(t)\vert|\left(4\pi+\kappa\vert|\Omega\vert| +2H\mathcal{F}A^\prime(t)\right)\le0$$
\begin{equation}\label{e15}
4\pi+\kappa\vert|\Omega\vert| +2H\mathcal{F}A^\prime(t)\le0
\end{equation}
By integrating inequality \eqref{e15} from $0$ to $h=\max_{p\in\Sigma}h_1(p)\ge0$, we obtain
$$4\pi h+2H\mathcal{F}(A(h)-A(0))+\kappa Vol(U_1)\le0$$
therefore
$$A^+=A(0)\ge \frac{2\pi h}{H\mathcal{F}}+\frac{\kappa Vol(U_1)}{2H\mathcal{F}}$$
which is equivalent o inequality \eqref{e16}.

If the equality holds, then all the above inequalities become equalities. In particular, by Isoperimetric Inequality for Surfaces Theorem, $\Gamma(t)$ is the boundary of a geodesic disc in $\M\times_f\{t\}$, for every $t\ge0$, and $K(\M)(p)\equiv-\kappa$ for all $p\in U$.

On the other hand, if equality holds on \eqref{e16}, the  inequality \eqref{e9} is a equality which implies that $e^{-2f}\equiv \widetilde{\mathcal{F}}$ and then the warping function $f$ is constant on $B$.    

\end{proof}

We have the following consequences.

\begin{corollary}\label{c2}
Let $\Sigma$ be a compact, without boundary, embedded surface in the warped product $\mf$, having constant mean curvature $H>0$ and area $A$. Let $U$ be the compact domain bounded by $\Sigma$, then $\Sigma$ lies in a  horizontal slab having height less than 
$\frac{H\mathcal{F}A}{\pi}-\kappa\frac{Vol(U)}{2\pi}$, where $\mathcal{F}$ is defined in the previous theorem. Moreover, one has equality if, and only if, $K(\M)\equiv \kappa$ inside $U$ and $\Sigma$ is foliated by circles.
\end{corollary}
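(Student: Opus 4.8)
The plan is to derive the estimate from Theorem \ref{t1} by cutting $\Sigma$ along a slice just above its lowest point and letting that slice descend to the bottom. Since $\Sigma$ is compact without boundary, $\pi_2|_\Sigma$ attains its minimum at a height $a$ and its maximum at a height $b$, and the slab to be estimated is $\{a\le t\le b\}$, of height $h=b-a$; here $h>0$ because an $H$-surface with $H\neq 0$ cannot lie in a single slice. Recall that $\xi$ is a Killing field whose flow is the vertical translation $(p,t)\mapsto(p,t+s)$; this is an isometry of $\mf$ because, by Definition \ref{d1}, the coefficients of $g$ do not depend on $t$, and it leaves $\pi_1$, $f$, areas and volumes unchanged. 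Hence Theorem \ref{t1} holds verbatim with $\M_0$ replaced by an arbitrary slice $\M\times_f\{c\}$.

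First I would pick, by Sard's theorem, regular values $c_n$ of $\pi_2|_\Sigma$ with $a<c_n<b$ and $c_n\downarrow a$. For each $n$ the part $\Sigma_n:=\Sigma\cap\{t\ge c_n\}$ of $\Sigma$ lying above the slice $\M\times_f\{c_n\}$ is a compact embedded surface with the same constant mean curvature $H>0$ (for the normal pointing into $U_n:=U\cap\{t>c_n\}$, where $U$ is the domain bounded by $\Sigma$), transverse to that slice and with boundary in it; its base projection $B_n:=\pi_1(\Sigma_n)$ lies in $B=\pi_1(\Sigma)$, so the constant $\mathcal{F}_n:=\sup_{B_n}(e^{-2f})\sup_{B_n}(e^{f})$ attached to $\Sigma_n$ satisfies $\mathcal{F}_n\le\mathcal{F}$. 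Applying Theorem \ref{t1} at the slice $\M\times_f\{c_n\}$ — to each connected component of $\Sigma_n$ and adding the resulting inequalities, the heights of the components that do not reach level $b$ being non-negative — gives
$$b-c_n\;\le\;\frac{H\,\mathcal{F}_n\,\mathrm{Area}(\Sigma_n)}{2\pi}-\kappa\,\frac{Vol(U_n)}{4\pi}\;\le\;\frac{H\,\mathcal{F}\,A}{2\pi}-\kappa\,\frac{Vol(U_n)}{4\pi}.$$

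The key step is to let $n\to\infty$: then $c_n\to a$ while $Vol(U_n)\uparrow Vol(U)$, so in the limit $h=b-a\le \frac{H\mathcal{F}A}{2\pi}-\kappa\frac{Vol(U)}{4\pi}$. This right-hand side bounds the positive number $h$, so it is itself positive and hence strictly smaller than its own double; this gives the asserted inequality $h<\frac{H\mathcal{F}A}{\pi}-\kappa\frac{Vol(U)}{2\pi}$. For the equality statement: if $h$ attains the sharp value $\frac{H\mathcal{F}A}{2\pi}-\kappa\frac{Vol(U)}{4\pi}$, then all the inequalities above must be equalities for every large $n$, so the equality case of Theorem \ref{t1} forces $K(\M)\equiv-\kappa$ on each $U_n$, hence on $U$, and forces each $\Sigma_n$ — and therefore $\Sigma$ — to be foliated by circles; conversely, if these two conditions hold then every step is an equality.

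The step I expect to be the main obstacle is transferring Theorem \ref{t1} rigorously to the truncations $\Sigma\cap\{t\ge c_n\}$: one has to check that they fulfill its hypotheses, control the base-dependent constant through $B_n\subseteq B$, and — the crucial point — make sure that when $\Sigma_n$ is disconnected one still obtains an estimate involving the \emph{total} area of $\Sigma_n$ and the \emph{total} volume of $U_n$ (this is exactly where the component-by-component application and summation is needed), so that passing to the limit recovers the full $A$ and $Vol(U)$. The remaining points — continuity of $c\mapsto Vol(U\cap\{t>c\})$ and the strictness supplied by $h>0$ — are routine.
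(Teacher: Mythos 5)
The paper states Corollary \ref{c2} without any proof, so there is no argument of the authors to compare yours against line by line; judged on its own terms, your strategy --- use the $t$-independence of the warped metric to transfer Theorem \ref{t1} to an arbitrary slice, truncate $\Sigma$ at regular levels $c_n$ descending to the minimum height $a$, and pass to the limit --- is the natural one. It even produces the sharper estimate $h\le \frac{H\mathcal{F}A}{2\pi}-\kappa\frac{Vol(U)}{4\pi}$, and your observation that this positive quantity is strictly smaller than its double does deliver the strict inequality exactly as stated. Be aware, however, that this makes the ``equality'' clause of the corollary vacuous for the bound as written (equality would force $h\le 0$); you sensibly discuss equality only for the sharper bound, and the mismatch is really a defect of the statement itself, whose right-hand side appears to want $Vol(U)/4\pi$ rather than $Vol(U)/2\pi$. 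The limit steps (monotone convergence of $Vol(U_n)$, $\mathcal{F}_n\le\mathcal{F}$ via $B_n\subseteq B$, $h>0$ because slices are totally geodesic and noncompact) are all fine.

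The one step that does not hold up as written is the component-by-component application of Theorem \ref{t1} when $\Sigma_n=\Sigma\cap\{t\ge c_n\}$ is disconnected --- precisely the step you yourself flag as the main obstacle. For a component $C$ of $\Sigma_n$, the theorem's domain $U_1$ is the region enclosed above the slice by $C$ together with its mirror image, and for distinct components these regions are in general neither pairwise disjoint nor do they tile $U_n=U\cap\{t>c_n\}$: if one component is nested inside another, its region is contained in the other's, so $\sum_C Vol(U_C)$ need not equal, or even dominate, $Vol(U_n)$; worse, on such an inner component the unit normal pointing into $U_C$ points \emph{out of} $U$, so the hypothesis $H=g(\overrightarrow{H},N)>0$ of Theorem \ref{t1} fails for that component and the theorem cannot be invoked for it at all. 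To close the gap you should avoid the summation altogether: show that for $\Sigma$ connected and $c_n$ a regular value sufficiently close to $a$ the truncation $\Sigma_n$ is connected (immediate when the minimum set of $\pi_2|_\Sigma$ is finite, since one then removes finitely many discs from a connected surface), in which case $U_{C}=U_n$ and a single application of Theorem \ref{t1} gives the estimate; a disconnected $\Sigma$ is then treated one boundary component of $U$ at a time. With that repair the argument is complete.
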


\begin{corollary}\label{c3}
Let $\Sigma$ be a compact, embedded surface in the warped product $\mf$, having constant mean curvature $H>0$ with boundary in the slice $\M_0=\M\times_f\{0\}$ and transverse to $\M_0$.Then  
$$\kappa\frac{Vol(U_1)}{4\pi}\le \frac{H\mathcal{F}A^{+}}{2\pi}$$ 
where $\mathcal{F}$, $A^+$ and $U_1$ are defined in the previous theorem. 
\end{corollary}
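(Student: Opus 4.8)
The plan is to deduce Corollary \ref{c3} directly from Theorem \ref{t1} by simply observing that the height $h$ of $\Sigma_1$ with respect to $\M_0$ is non-negative. Indeed, since $\Gamma\subset\Sigma^+$ and $\Sigma_1$ is the connected component of $\Sigma^+$ above $\M_0$ containing $\Gamma$, we have $h=\max_{p\in\Sigma}h_1(p)\ge 0$; this is exactly the inequality that was used in the proof of Theorem \ref{t1} when integrating \eqref{e15} from $0$ to $h$.

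First I would invoke Theorem \ref{t1} to obtain
\begin{equation*}
h\le \frac{H\mathcal{F}A^{+}}{2\pi}-\kappa\frac{Vol(U_1)}{4\pi}.
\end{equation*}
Then, using $h\ge 0$, I would conclude
\begin{equation*}
0\le h\le \frac{H\mathcal{F}A^{+}}{2\pi}-\kappa\frac{Vol(U_1)}{4\pi},
\end{equation*}
and rearranging gives precisely
\begin{equation*}
\kappa\frac{Vol(U_1)}{4\pi}\le \frac{H\mathcal{F}A^{+}}{2\pi},
\end{equation*}
as claimed. The hypotheses of Theorem \ref{t1} are met verbatim, so nothing further is required.

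There is essentially no obstacle here: the corollary is a formal weakening of Theorem \ref{t1}. The only point worth a sentence is to make explicit that $\Sigma_1$, $U_1$, $A^+$ and $\mathcal{F}$ in the statement are the same objects built in Section \ref{mainsection} from the embedded surface $\Sigma$ with transverse boundary on the slice, and that the sign convention $H=g(\overrightarrow{H},N)>0$ with $N$ pointing into $U_1$ is the one inherited from the theorem. If one wanted, one could also remark that this inequality is sharp exactly when $h=0$, i.e.\ when $\Sigma$ does not rise above the slice at all, in which case $Vol(U_1)=0$ and both sides are handled by the equality discussion in Theorem \ref{t1}; but this remark is optional and not needed for the proof.
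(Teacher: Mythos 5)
Your proof is correct and is exactly the argument the paper intends (the corollary is stated without proof as an immediate consequence of Theorem \ref{t1}): since the height $h$ of $\Sigma_1$ above $\M_0$ is non-negative, the right-hand side of \eqref{e16} must be non-negative, which is the claimed inequality.
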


\section{Mean curvature equation}\label{meancurvatureequation}
Let $\Omega\subset \M$ be a domain and $u:\Omega \to \R$ be a smooth function, The graph of $u$ in $\mf$ is the set
\begin{equation}\label{g1}
\Sigma_u=\{(p,u(p))\in\mf;p\in\Omega\}
\end{equation}
Let $\overrightarrow{H}$ denote the mean curvature vector field of $\Sigma_u$ and we choose a unit normal vector field $\overrightarrow{N}$ to $\Sigma
_u$ satisfying $g(\overrightarrow{N},\xi)\le0$. Throughout this article a surface having constant mean curvature $H$ will be called an $H$-surface. In order to obtain the mean curvature equation in the divergence form, we prove the next lemma.
\begin{lemma}\label{ldiva}Let $X$ be a vector field in $\M\times_f \R$
\begin{equation}\label{diva}
e^{f}\div_f(X)=\div_{\M}(e^{f}d\pi_1(X))+\xi\left(e^{-f}\ g(X,\xi)\right),
\end{equation}
where $\div_f$ and $\div_{\M}$ are the divergence on $\mf$ and $\M$, respectively.
\end{lemma}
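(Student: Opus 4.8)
The plan is to prove the divergence identity \eqref{diva} by a direct computation using the connection formulas of Lemma \ref{l1}, working at an arbitrary point $(p,t)\in\mf$ with a conveniently chosen local frame. First I would fix a point and choose an orthonormal frame for $\mf$ adapted to the warped-product structure: take $\{E_1,E_2\}$ to be lifts of a local orthonormal frame of $\M$ (which I may assume is geodesic at $p$, so $\nabla_{E_i}E_j=0$ at $p$) and let $E_3=e^{-f}\xi$, which is a unit vertical field. The key point is that $\{E_1,E_2,E_3\}$ is an orthonormal frame for $g$, since $g(\xi,\xi)=e^{2f}$, and its horizontal part is geodesic at $p$ by construction. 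Then by definition $\div_f(X)=\sum_{a=1}^{3} g(\overline{\nabla}_{E_a}X,E_a)$.

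Next I would split $X$ into its horizontal and vertical parts, writing $X = X^{\M} + e^{-2f} g(X,\xi)\,\xi$, where $X^{\M}$ is (the lift of) $d\pi_1(X)$; note $e^{-2f}g(X,\xi)$ is precisely the coefficient making the vertical part equal $\left(e^{-2f}g(X,\xi)\right)\xi$. I would compute $g(\overline{\nabla}_{E_a}X,E_a)$ term by term, using parts (1)--(3) of Lemma \ref{l1} to handle how the connection acts on horizontal versus vertical fields. For the horizontal directions $a=1,2$: the contribution of $X^{\M}$ gives, after accounting for the fact that $\overline{\nabla}_{E_i}X^{\M}$ is the lift of $\nabla_{E_i}(d\pi_1(X))$ plus a vertical term (which is $g$-orthogonal to $E_i$), exactly $\div_{\M}(d\pi_1(X))$ at $p$; the contribution of the vertical part of $X$ produces a term involving $E_i f$ via part (2). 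For the vertical direction $a=3$: using $\overline{\nabla}_{E_3}X^{\M}=e^{-f}\overline{\nabla}_\xi X^{\M}=e^{-f}(X^{\M}f)\xi$ (part (2)), this is orthogonal to $E_3$, while the vertical part of $X$ contributes through part (3), giving a $\xi$-derivative term. Collecting everything, I expect the horizontal-vertical cross terms (those with $E_if$) to recombine with $\div_{\M}(d\pi_1(X))$ into $\div_{\M}(e^f d\pi_1(X)) = e^f\div_{\M}(d\pi_1(X)) + e^f\, \grad_\M f \cdot d\pi_1(X)$, and the $E_3$-term to recombine into $\xi\left(e^{-f}g(X,\xi)\right)$; the overall common factor $e^{f}$ on the left then matches after clearing denominators.

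The main obstacle will be keeping careful track of the factors of $e^{f}$ and $e^{-f}$ and verifying that the terms genuinely recombine into the two divergence expressions on the right-hand side rather than leaving a leftover. In particular, the identity $\div_{\M}(e^f Z) = e^f\div_{\M}(Z) + e^f g_\M(\grad_\M f, Z)$ for $Z = d\pi_1(X)$ is what I would use to absorb the cross terms $\sum_i (E_i f)\,g(X,\xi)e^{-2f}\cdot(\text{stuff})$; I need to check the coefficient of $g_\M(\grad f, d\pi_1(X))$ coming out of the computation is exactly $e^f$ (not $e^{-f}$ or $2e^f$), which is where the choice $E_3=e^{-f}\xi$ and the relation $g(\xi,\xi)=e^{2f}$ must be used consistently. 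Similarly, expanding $\xi\left(e^{-f}g(X,\xi)\right) = -( \xi f)e^{-f}g(X,\xi) + e^{-f}\xi(g(X,\xi))$ and comparing with the $a=3$ contribution — where $\overline{\nabla}_{E_3}\big(e^{-2f}g(X,\xi)\xi\big)$ must be computed using part (3), noting $\xi f=0$ since $f$ is a function on the base lifted to $\mf$ — should close the identity. Once the bookkeeping is done the result is immediate, so I would present the frame setup and the three-term computation cleanly and leave the routine algebra to the reader.
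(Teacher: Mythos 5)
Your strategy is sound and leads to a correct proof, but it is a genuinely different route from the paper's. The paper never invokes the connection formulas of Lemma \ref{l1}: it writes $\div_f(X)=\tfrac{1}{\sqrt{\det g}}\sum_{i,j}\overline{\partial}_{x_i}\bigl(\sqrt{\det g}\;g^{ij}\,g(X,\overline{\partial}_{x_j})\bigr)$ in adapted coordinates, uses $\sqrt{\det g}=e^{f}\sqrt{\det g_{\M}}$ together with the block-diagonal form of $g$ to split the sum into a base part and a fiber part, and then regroups the $\partial_{x_i}(e^{f})$ terms into $\div_{\M}(e^{f}d\pi_1(X))$. Your orthonormal-frame computation is more invariant and makes it conceptually clear where the cross term $g_{\M}(d\pi_1(X),\grad_{\M}f)$ comes from, at the mild cost of having to justify applying Lemma \ref{l1} (stated for lifts) to the components of a general field; the paper's computation is more mechanical but entirely self-contained.

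That said, two of your term-by-term attributions are swapped and should be fixed before you write this up. For $a=i\in\{1,2\}$, the vertical part $\bigl(e^{-2f}g(X,\xi)\bigr)\xi$ of $X$ contributes
$g\bigl(\overline{\nabla}_{E_i}(e^{-2f}g(X,\xi)\,\xi),E_i\bigr)=e^{-2f}g(X,\xi)\,(E_if)\,g(\xi,E_i)=0$,
since $\xi\perp E_i$; so no $E_if$ term arises from the horizontal directions. Conversely, for $a=3$ the horizontal part is \emph{not} orthogonal to $E_3$: by part (2) of Lemma \ref{l1}, $\overline{\nabla}_{\xi}X^{\M}$ has vertical component $(X^{\M}f)\,\xi$, which is parallel to $E_3=e^{-f}\xi$, and one gets $g(\overline{\nabla}_{E_3}X^{\M},E_3)=X^{\M}f=g_{\M}(d\pi_1(X),\grad_{\M}f)$ --- this is precisely the cross term you need to absorb into $\div_{\M}(e^{f}d\pi_1(X))$. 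With that correction (and $\xi f=0$, which you already note), the vertical-vertical term gives $e^{-2f}\xi(g(X,\xi))$, and multiplying the total by $e^{f}$ closes the identity \eqref{diva}.
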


\begin{proof} Let $\{x_1,x_2\}$ be local coordinates for $\M$, and $\{x_3=t\}$ a local coordinate for $\R$ whose associated vectors fields are $\{\partial_{x_1}, \partial_{x_2},\partial_{x_3}\}$. Their lifts to $\mf$ are denoted by $\{\overline{\partial}_{x_1}, \overline{\partial}_{x_2}, \overline{\partial}_{x_3}=\xi\}$, these are the associated vector field to the local coordinates $\{x_1,x_2,x_3\}$ of $\mf$. Denoting by $g^{ij}$ the coefficients of the inverse matrix of $g$ and aplying the definition of  the divergence of a vector field $X$ on $\mf$ we obtain

\begin{eqnarray*}
\div_f(X)&=&\dfrac{1}{\sqrt{\mathrm{det} g}}\sum_{i,j=1,2,3} \left(\overline{\partial}_{x_i}\left(\sqrt{\mathrm{det g}}\ g^{ij}\ g(X,\overline{\partial}_{x_j}) \right)\right)\\
&=& \dfrac{1}{e^{f} \ \sqrt{\mathrm{det}g_{\M}}}\left(\sum_{i,j=1,2}\partial_{x_i}\left(e^f\sqrt{\mathrm{det g_{\M}}}\ g_{\M}^{ij}\ g_{\M}(d\pi_1(X),\partial_{x_j}) \right)+\overline{\partial}_{x_3}\left(e^{-f}\sqrt{\mathrm{det g_\M}} \ g(X,\overline{\partial}_{x_3}) \right)\right)\\
&= &\dfrac{1}{\ \sqrt{\mathrm{det}g_{\M}}}\left(\sum_{i,j=1,2}\partial_{x_i}\left( \sqrt{\mathrm{det g_{\M}}}\ g_{\M}^{ij}\ g_{\M}(d\pi_1(X),\partial_{x_j}) \right)+\overline{\partial}_{x_3}\left( e^{-2f} \sqrt{\mathrm{det g_\M}} \ g(X,\overline{\partial}_{x_3}) \right)\right) +\\
& &+\dfrac{1}{e^f\ \sqrt{\mathrm{det}g_{\M}}} \left(\sqrt{\mathrm{det}g_{\M}}\sum_{i,j=1,2} \partial_{x_i}(e^f)\ g_{\M}(d\pi_1(X),\partial_{x_j}) \ g_{\M}^{ij}\ \right)\\
&=&\div_{\M}(d\pi_1(X))+\dfrac{1}{e^f}\ g_{\M}\left( d\pi_1(X),\mathrm{grad}_{\M}(e^f)\right)+\overline{\partial}_{x_3}\left( e^{-2f}\ g(X,\overline{\partial}_{x_3}) \right).
\end{eqnarray*}
The last equality follows from de definition of the gradient on $\M$ which is given by $\mathrm{grad}_{\M}(e^f):=\sum_{i=1,2}\partial_{x_i}(e^f)g_{\M}^{ij}\partial_{x_j}$. Then, 

\begin{equation*}
e^{f}\div_f(X)=\div_{\M}(e^{f}d\pi_1(X))+\xi\left( e^{-f}\ g(X,\xi)\right).
\end{equation*}

\end{proof}
Taking the equation \eqref{diva} into account we obtain the following mean curvature equation for vertical graphs in $\M\times_f\R$

\begin{lemma}\label{l3a}
Let $\Sigma_u\subset\mf$ be the vertical graph of a smooth function $u:\Omega\subset\M \to \R$ having mean curvature function $H$. Then, $u$ satisfies
\begin{equation}\label{e2a}
-2He^f=\div_\M\left(e^f\ \dfrac{\mathrm{grad}_{\M} u}{W}\right), 
\end{equation} 
where $W^2=e^{-2f}+\|\mathrm{grad}_{\M} u\|^2$
\end{lemma}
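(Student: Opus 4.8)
\textbf{Proof proposal for Lemma \ref{l3a}.}

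The plan is to apply the divergence identity \eqref{diva} from Lemma \ref{ldiva} to a carefully chosen vector field $X$ on $\mf$ whose divergence computes the mean curvature of the graph $\Sigma_u$. The natural candidate is the upward-pointing unit normal field of the graph, extended suitably off $\Sigma_u$, or equivalently the field $X = -\overrightarrow{N}$; the classical fact is that for a hypersurface the mean curvature is, up to a constant, the divergence of the unit normal. So first I would write down explicitly the unit normal $\overrightarrow{N}$ to $\Sigma_u$ in $\mf$. Since the graph is the level set $\{t - u(p) = 0\}$, its normal is proportional to $\grad_f(t - u) = \grad_f t - \grad_f u$. Using that $\grad_f t = e^{-2f}\xi$ (because $\xi$ has norm $e^f$) and that $\grad_f u$ is the horizontal lift of $\grad_\M u$ (the warping does not affect horizontal gradients), one gets $\overrightarrow{N} = \tfrac{1}{W}\bigl(\grad_\M u - e^{-2f}\xi\bigr)$ with $W^2 = e^{-2f} + \|\grad_\M u\|^2$; the sign is fixed by the requirement $g(\overrightarrow N,\xi)\le 0$, and indeed $g(\overrightarrow N,\xi) = -e^{-f}/W \le 0$. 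I would double-check the normalization: $g(\overrightarrow N,\overrightarrow N) = \tfrac{1}{W^2}(\|\grad_\M u\|^2 + e^{-4f}\cdot e^{2f}) = 1$.

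Next I would invoke the standard formula $2H = -\div_f(\overrightarrow N)$ — with the sign convention matching $H = g(\overrightarrow H, \overrightarrow N)$ and our choice of $\overrightarrow N$ — and apply Lemma \ref{ldiva} to $X = \overrightarrow N$. The identity \eqref{diva} gives
\begin{equation*}
e^f \div_f(\overrightarrow N) = \div_\M\!\left(e^f\, d\pi_1(\overrightarrow N)\right) + \xi\!\left(e^{-f}\, g(\overrightarrow N,\xi)\right).
\end{equation*}
Now $d\pi_1(\overrightarrow N) = \grad_\M u / W$, which depends only on $p\in\M$ and not on $t$, so the first term on the right is $\div_\M\!\bigl(e^f\,\grad_\M u / W\bigr)$. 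For the second term, $e^{-f} g(\overrightarrow N,\xi) = e^{-f}\cdot(-e^{-f}/W) = -e^{-2f}/W$, and again this is independent of the fiber coordinate $t$, so $\xi$ applied to it vanishes. Hence $e^f\div_f(\overrightarrow N) = \div_\M\!\bigl(e^f\,\grad_\M u/W\bigr)$, and combining with $2H = -\div_f(\overrightarrow N)$ yields $-2He^f = \div_\M\!\bigl(e^f\,\grad_\M u/W\bigr)$, which is \eqref{e2a}.

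The main obstacle, and the point requiring the most care, is the bookkeeping of signs and conventions: precisely which unit normal is chosen, the sign in the relation between $2H$ and $\div_f(\overrightarrow N)$, and the orientation implicit in $H = g(\overrightarrow H,\overrightarrow N)$. A clean way to handle this without appealing to a black-box formula is to compute $\div_f(\overrightarrow N)$ directly as $\sum_i g(\overline\nabla_{e_i}\overrightarrow N, e_i)$ over an orthonormal frame: choosing the frame adapted to $\Sigma_u$ (two tangent directions plus $\overrightarrow N$ itself, the latter contributing nothing since $g(\overline\nabla_{\overrightarrow N}\overrightarrow N,\overrightarrow N)=0$), the tangential part reproduces the trace of the shape operator, i.e. $-2H$ with our conventions. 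One should also verify that the term $\xi\bigl(e^{-f}g(X,\xi)\bigr)$ genuinely vanishes, which rests on the observation that every quantity entering $\overrightarrow N$ after the $1/W$ normalization is a pullback from $\M$ — this is where the warped-product structure, and specifically that $f$ depends only on the base point, is used essentially.
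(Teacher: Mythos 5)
Your proposal is correct and takes essentially the same route as the paper: the paper represents $\Sigma_u$ as the zero level set of $F(x,y,t)=u(x,y)-t$, takes the unit normal $\grad_f(F)/\vert|\grad_f(F)\vert|=\frac{1}{W}\left(\overline{\mathrm{grad}_{\M}(u)}-e^{-2f}\xi\right)$ (the same normal you write down), invokes $2H=-\div_f\left(\grad_f(F)/\vert|\grad_f(F)\vert|\right)$, and applies Lemma \ref{ldiva}, with the $\xi(\cdot)$ term vanishing exactly for the reason you give, namely $t$-independence. The only slip is the value $g(\overrightarrow{N},\xi)=-e^{-f}/W$, which should be $-1/W$ since $g(\xi,\xi)=e^{2f}$; this is immaterial here because either expression is independent of the fiber coordinate, so the conclusion stands.
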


 \begin{proof}
 We consider the smooth function $u^{*}:\mf\longrightarrow\R$ defined by $u^{*}(x, y, t) = u(x, y)$. Set $F(x, y, t) =u^{*}(x, y, t)-t$, therefore zero is a regular value of $F$ and $F^{-1}(0)=\Sigma_u$. It is well-known that the function $H$ satisfies
\begin{equation}\label{e4a}
    2H=-\div_f\left(\dfrac{\grad_f(F)}{\vert|\grad_f(F)\vert|}\right),
\end{equation}
where $\div_f$ and $\grad_f$ denote the divergence and gradient in $\mf$, respectively. Let $X$ be a vector field on $\M$, we denote its lift to $\mf$ by $\overline{X}$. We have, 
$$\mathrm{grad}_f(F)=\overline{\mathrm{grad}_{\M}(u)} -e^{-2f}\,\xi.$$
Setting $W^2=\vert|\grad_f(F)\vert|^2=e^{-2f}+\|\mathrm{grad}_{\M} u\|^2$ and applying Lemma \ref{ldiva} in equation \eqref{e4a}, we obtain

\begin{eqnarray*}
  2H \ e^{f}&=&-e^{f}\div_f\left(\dfrac{\grad_f(F)}{\vert|\grad_f(F)\vert|}\right)\\
  &=&-\mathrm{div}_{\M}\left(e^f \dfrac{\mathrm{grad}_{\M}(u)}{W}\right)+\xi\left(e^{-f}\ g\left(\dfrac{e^{-2f} \ \xi}{W}, \xi\right)\right)\\
  &=&-\mathrm{div}_{\M}\left(e^f \dfrac{\mathrm{grad}_{\M}(u)}{W}\right).
\end{eqnarray*}

\end{proof}

\subsection{Some $H$-surfaces in $\h\times_f\R$}
Now let us focus on the case $\mathbb{M}=\h$, where $\h$ is the connected, simply connected two-dimensional Hyperbolic disc $\h=\{(x,y)\in\R^2; x^2+y^2<1\}$ having constant curvature $\kappa=-1$. We consider coordinates $(\rho, \theta)$ in $\h$, where $\rho$ is the hyperbolic distance to the origin and $\theta$ is the angle between a segment from the origin and the positive semi-axis $x$. More precisely, we consider a parametrization $\varphi(\rho,\theta)=(\tanh(\rho/2)\cos\theta,\tanh(\rho/2)\sin\theta)$ from $(0,+\infty)\times[0,2\pi)$ in the hyperbolic disc $\h$. For simplicity, we treat properties of the surfaces $\Sigma_u$ using the disc model $\h$ or the slab $(0,+\infty)\times[0,2\pi)$ (via the parametrization $\varphi$). In these polar coordinates the metric on $\h$ is given by

\begin{equation}\label{e3a}
g_{\h}=d\rho^2+\sinh^2(\rho)\ d\theta^2
\end{equation}   
Let $u:\Omega\subset \mathbb{H}\longmapsto \mathbb{R}$ be a smooth function. A parametrization of the graph $\Sigma_u$ given in \eqref{g1} is
$$\psi(\rho,\theta) = (\tanh(\rho/2)\cos\theta,\tanh(\rho/2)\sin\theta,u(\rho,\theta)),$$
where $(\rho,\theta)\in\varphi^{-1}(\Omega)$. Under this notation, we have the next lemma.
\begin{lemma}\label{intpa}
Let $\Sigma_u$ be a surface in $\h\times_f\R$ which is the graph of a smooth function $u:\Omega \to \R$ defined on a domain $\Omega\subset\h$. Suppose the unit normal vector field $\overrightarrow{N}$ of $\Sigma_u$ points downwards in $\h\times_f\R$, that is, $g(\overrightarrow{N},\xi)\le0$ and let $H$ be the mean curvature function. Then $u=u(\rho,\theta)$ satisfies
\begin{equation}
-2H\  e^f=\dfrac{1}{\sinh\rho}\dfrac{\partial}{\partial_{\rho}}\left[ g_\h\left(e^f\ \dfrac{\mathrm{grad}_{\h} u}{W},\partial_\rho\right)\sinh\rho \right] + \dfrac{1}{\sinh^2\rho}\dfrac{\partial}{\partial_{\theta}}\left[ g_\h\left(e^f\ \dfrac{\mathrm{grad}_{\h} u}{W},\partial_\theta\right) \right]
\end{equation}
where $f=f(\rho,\theta)$, $W^2=e^{-2f}+\|\mathrm{grad}_{\h} u\|^2$, and $\dfrac{\partial}{\partial_z}$ denotes the derivative with respect to $z$.
\end{lemma}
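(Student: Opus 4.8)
The plan is to specialize Lemma~\ref{l3a} to the hyperbolic disc $\h$ written in polar coordinates, so the only real content is to expand the base divergence $\div_\h$ in the coordinates $(\rho,\theta)$ and then verify that the metric $g_\h$ from \eqref{e3a} produces exactly the factors $\sinh\rho$ and $\sinh^2\rho$ appearing in the claimed formula. First I would recall the intrinsic divergence formula on a Riemannian surface: for a vector field $Z$ on $\h$, $\div_\h(Z)=\frac{1}{\sqrt{\det g_\h}}\sum_k \partial_{x_k}\!\big(\sqrt{\det g_\h}\; g_\h(Z,\partial_{x_k})\,\tfrac{1}{\|\partial_{x_k}\|^2}\big)$ when the coordinate basis is orthogonal, which is the case here since $g_\h=d\rho^2+\sinh^2\rho\,d\theta^2$ is diagonal. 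Thus $\sqrt{\det g_\h}=\sinh\rho$, the $\rho$-line has unit length and the $\theta$-line has squared length $\sinh^2\rho$, and for any $Z$ one gets
\begin{equation*}
\div_\h(Z)=\frac{1}{\sinh\rho}\,\frac{\partial}{\partial_\rho}\!\Big(g_\h(Z,\partial_\rho)\,\sinh\rho\Big)+\frac{1}{\sinh^2\rho}\,\frac{\partial}{\partial_\theta}\Big(g_\h(Z,\partial_\theta)\Big).
\end{equation*}

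Next I would apply this with $Z=e^f\,\dfrac{\grad_\h u}{W}$, where $W^2=e^{-2f}+\|\grad_\h u\|^2$ exactly as in Lemma~\ref{l3a}. Since Lemma~\ref{l3a} gives $-2He^f=\div_\M\!\big(e^f\,\grad_\M u/W\big)$ with $\M=\h$, substituting the polar-coordinate expression for $\div_\h$ above yields directly the asserted identity; here $f=f(\rho,\theta)$ and $u=u(\rho,\theta)$ are viewed as functions on the slab $(0,+\infty)\times[0,2\pi)$ via the parametrization $\varphi$, which is legitimate because $\varphi$ is an isometry onto its image and the divergence is a coordinate-free operator. I would note that no further differentiation of the metric is needed: the $\sinh\rho$ weights are already packaged inside the derivatives, and the terms $g_\h(Z,\partial_\rho)$ and $g_\h(Z,\partial_\theta)$ are left unexpanded precisely as in the statement, so one does not have to compute $\grad_\h u$ componentwise at this stage.

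The only mild subtlety — and the step most likely to need care — is bookkeeping with the two metrics: one must make sure that "$g_\h$" inside the formula always denotes the pairing in the base $\h$ and not the ambient warped metric $g$, and that the $1/\|\partial_{x_k}\|^2$ normalization is used consistently (equivalently, that one works with $g_\h(Z,\partial_\rho)$ and $g_\h^{-1}(\partial_\theta,\partial_\theta)\,g_\h(Z,\partial_\theta)=\sinh^{-2}\rho\,g_\h(Z,\partial_\theta)$, which is where the $1/\sinh^2\rho$ factor comes from). I would also remark in passing that $Z=e^f\,\grad_\h u/W$ is smooth wherever $u$ is, since $W\geq e^{-f}>0$, so the divergence is classically defined and no distributional argument is required. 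Beyond that the proof is a one-line substitution, so I would keep it short rather than grinding through the expansion of $\grad_\h u=u_\rho\,\partial_\rho+\sinh^{-2}\rho\,u_\theta\,\partial_\theta$, which is not needed for the statement as written.
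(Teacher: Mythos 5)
Your proof is correct, but it takes a genuinely different route from the paper. You expand $\div_\h$ pointwise using the coordinate formula for the divergence in the orthogonal coordinates $(\rho,\theta)$ (with $\sqrt{\det g_\h}=\sinh\rho$, $\|\partial_\rho\|^2=1$, $\|\partial_\theta\|^2=\sinh^2\rho$), apply it to $Z=e^f\,\grad_\h u/W$, and substitute into Lemma \ref{l3a}; the computation
\begin{equation*}
\div_\h(Z)=\frac{1}{\sinh\rho}\,\partial_\rho\bigl(\sinh\rho\; g_\h(Z,\partial_\rho)\bigr)+\frac{1}{\sinh^2\rho}\,\partial_\theta\bigl(g_\h(Z,\partial_\theta)\bigr)
\end{equation*}
is correct, the last step using that $\sinh\rho$ is independent of $\theta$ so the $1/\sinh^2\rho$ factor passes through $\partial_\theta$. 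The paper instead proceeds integrally: it applies the divergence theorem to the same vector field over an arbitrary coordinate rectangle $[\rho_0,\rho_1]\times[\theta_0,\theta_1]$, computes the four boundary flux integrals explicitly, rewrites their sum as an integral of the right-hand side of the claimed identity, and then invokes the arbitrariness of the rectangle to localize to a pointwise equation. Both arguments rest on Lemma \ref{l3a} and on the polar form \eqref{e3a} of the hyperbolic metric. Your direct expansion is shorter and avoids the localization step (which in the paper is left implicit as ``once the equation holds for any such $\Omega$''); the paper's integral version has the mild advantage of making the conormal flux terms explicit, which is the form actually used later when integrating the rotationally invariant ODE. Your remarks on keeping $g_\h$ (and not the ambient warped metric) inside the pairings, and on $W\ge e^{-f}>0$ guaranteeing smoothness of $Z$, are both apt and do not hide any gap.
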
 
\begin{proof}
From Lemma \ref{e3a} and  Divergence's Theorem
\begin{equation}\label{e17}
-\int_\Omega 2H\, e^fdA=\int_\Omega\mathrm{div}_\h\left(e^f\ \dfrac{\mathrm{grad}_{\h} u}{W}\right)dA=\int_{\partial\Omega}g_\h\left(e^f\ \dfrac{\mathrm{grad}_{\h} u}{W},\eta\right)ds
\end{equation}
where $\Omega$ is a domain with boundary $\partial\Omega$ and $\eta$ is the unit outer-conormal to $\Omega$.

Let us consider the domain $\Omega=[\rho_0,\rho_1]\times[\theta_0,\theta_1]$ in the $\rho\theta$-plane. Notice $\partial\Omega=\gamma_1\cup\gamma_2\cup\gamma_3\cup\gamma_4$, where 
\begin{itemize}
\item $\gamma_1(s)=(s,\theta_0), \hspace{.2cm} \rho_0\le \rho_1$.
\item $\gamma_2(s)=(\rho_1,s), \hspace{.2cm} \theta_0\le \theta_1$.
\item $\gamma_3(s)=(\rho_1-s,\theta_1), \hspace{.2cm} 0\le s\le \rho_1- \rho_0$.
\item $\gamma_4(s)=(\rho_0,\theta_1-s), \hspace{.2cm} 0\le s\le \theta_1-\theta_0$.
\end{itemize}

Therefore
\begin{equation}\label{e21}
-\int_\Omega 2H\, e^fdA=\sum_{i=1}^4\int_{\partial\Omega}g_\h\left(e^f\ \dfrac{\mathrm{grad}_{\h} u}{W},\eta_i\right)ds
\end{equation}
A straightforward computation give us
\begin{enumerate}
\item $\displaystyle{\int_{\gamma_1}}g_\h\left(e^f\ \dfrac{\mathrm{grad}_{\h} u}{W},\eta_1\right)ds=- \displaystyle{\int_{\rho_0}^{\rho_1}}g_\h\left(e^f\ \dfrac{\mathrm{grad}_{\h} u}{W},\partial_\theta\right)(\rho,\theta_0)\dfrac{1}{\sinh\rho}d\rho $
\item $\displaystyle{\int_{\gamma_2}}g_\h\left(e^f\ \dfrac{\mathrm{grad}_{\h} u}{W},\eta_2\right)ds= \displaystyle{\int_{\theta_0}^{\theta_1}}g_\h\left(e^f\ \dfrac{\mathrm{grad}_{\h} u}{W},\partial_\rho\right)(\rho_1,\theta)\sinh\rho_1 d\theta $
\item $\displaystyle{\int_{\gamma_3}}g_\h\left(e^f\ \dfrac{\mathrm{grad}_{\h} u}{W},\eta_3\right)ds= \displaystyle{\int_{\rho_0}^{\rho_1}}g_\h\left(e^f\ \dfrac{\mathrm{grad}_{\h} u}{W},\partial_\theta\right)(\rho,\theta_1)\dfrac{1}{\sinh\rho}d\rho $
\item $\displaystyle{\int_{\gamma_4}}g_\h\left(e^f\ \dfrac{\mathrm{grad}_{\h} u}{W},\eta_4\right)ds=- \displaystyle{\int_{\theta_0}^{\theta_1}}g_\h\left(e^f\ \dfrac{\mathrm{grad}_{\h} u}{W},\partial_\rho\right)(\rho_0,\theta)\sinh\rho_0 d\theta $
\end{enumerate}
Using the last four expressions into equation \eqref{e21}, we obtain
\begin{equation}\label{e22}
-\int_\Omega2H  e^fdA=\int_\Omega\left[\dfrac{1}{\sinh\rho}\dfrac{\partial}{\partial_{\rho}}\left[ g_\h\left(e^f\ \dfrac{\mathrm{grad}_{\h} u}{W},\partial_\rho\right)\sinh\rho \right] + \dfrac{1}{\sinh^2\rho}\dfrac{\partial}{\partial_{\theta}}\left[ g_\h\left(e^f\ \dfrac{\mathrm{grad}_{\h} u}{W},\partial_\theta\right) \right]\right]dA
\end{equation}
Once equation \eqref{e22} holds for any such $\Omega$ we complete the proof.

\end{proof}

\subsection{Rotational spheres in $\h\times_f\R$ }

In this section we will construct rotationally-invariant spheres. In order to do that, the warping function $f$ must depend only on $\rho$ as well as the function $u$, that is, $f(\rho,\theta)\equiv f(\rho)$ and $u(\rho,\theta)\equiv u(\rho)$.

Let $I$ be an interval in the positive $x$-axis and $u: I\to \R$ be a smooth function whose graph $\Gamma_u$ lies in the $xt$-plane. Denote by $\mathfrak{G}$ the group of rotations around the origin of the hyperbolic disc $\h$. The rotationally-invariant surface $\Sigma_u$ obtained from $\Gamma_u$ is the surface $\Sigma_u=\mathfrak{G}\Gamma_u$. We assume that $\Sigma_u$ has non-negative constant mean curvature $H$ with respect to the pointing downwards unit normal vector field $\overrightarrow{N}$ of $\Sigma_u$. In order to give the conditions to the existence of spheres in the warped product $\h\times_f\R$, we need the next definition.

\begin{definition}\label{d2}
Consider the warped product $\h\times_f\R$, with $f\equiv f(\rho)$. Let $H>0$ be a positive constant and $d$ a constant depending on $H$ and $f$. We say the function $u=u(\rho)$ is an admissible solution if the following conditions holds 
\begin{enumerate}
\item There exists an interval $[0,\rho_0], \ \rho_0<\infty,$ such that the function 
$$G(\rho)=d-2HF(\rho), \hspace{.4cm} \textnormal{with} \hspace{.4cm} F_\rho(\rho)=e^{f(\rho)}\sinh\rho$$ 
satisfies $G(0)=0$, $G(\rho)>0$ on $(0,\rho_0]$ and $G(\rho)<e^{f(\rho)}\sinh\rho$, on $(0,\rho_0]$. Where $F_\rho$ denotes the derivative of the function $F$ with respect to $\rho$.
\item The graph $\Gamma_u$ of the function $u$  is  defined on $[0,\rho_0],\  \rho_0<\infty$ and is given by the ordinary differential equation (ODE)
\begin{equation}\label{ea20}
u_\rho(\rho)=\dfrac{G(\rho)}{e^{f(\rho)}\sqrt{e^{2f(\rho)}\sinh^2\rho-G^2(\rho)}}
\end{equation}
which has zero derivative at $\rho=0$, non-finite derivative at $q=(\rho_0,u(\rho_0))$ and finite geodesic curvature at the points $p$ and $q$ as long as the left side of equation \eqref{ea20} is well defined on $\rho=0$.
\end{enumerate}
\end{definition}

Now we presented the main theorem of this section, the notation on Definition \ref{d2} will be used.
\begin{theorem}[Rotational spheres]\label{t2}
Let $f=f(\rho)$ be a warping function depending only on $\rho$. Suppose that for a positive constant $H>0$ there exists a constant $d$ (depending on $H$ and $f$) and an admissible solution $u(\rho)$. Then there exists a rotational sphere $S^H$ having constant mean curvature $H$, which is invariant by the group $\mathfrak{G}$ and up to vertical translations, is a bi-graph with respect to the slice $\h_0=\h\times_f\{0\}$. 
\end{theorem}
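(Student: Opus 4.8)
The plan is to reduce the problem to an analysis of the profile curve $\Gamma_u$ and then to glue two congruent copies of the graph $\Sigma_u$ along a totally geodesic slice. Since $f=f(\rho)$ and $u=u(\rho)$ we have $\mathrm{grad}_{\h}u=u_\rho\,\partial_\rho$, hence $g_\h(\mathrm{grad}_{\h}u,\partial_\theta)=0$ and the $\theta$-term in Lemma~\ref{intpa} vanishes, while $g_\h(\mathrm{grad}_{\h}u,\partial_\rho)=u_\rho$ and $W^2=e^{-2f}+u_\rho^2$. Multiplying the remaining identity by $\sinh\rho$ gives
\[
-2H e^{f}\sinh\rho=\frac{d}{d\rho}\!\left(\frac{e^{f}u_\rho\sinh\rho}{W}\right),
\]
so, integrating once with $F_\rho=e^{f}\sinh\rho$ and $d$ the constant of integration, $\frac{e^{f}u_\rho\sinh\rho}{W}=d-2HF(\rho)=G(\rho)$; squaring and solving for $u_\rho$ recovers exactly the ODE~\eqref{ea20}. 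Thus an admissible solution $u$ (Definition~\ref{d2}) is precisely a rotationally invariant graph $\Sigma_u$ of constant mean curvature $H$ over the geodesic disc $\{\rho\le\rho_0\}\subset\h$ whose profile curve $\Gamma_u$ is regular and monotone on $[0,\rho_0]$, with horizontal tangent at the point $p$ over $\rho=0$, vertical tangent at the point $q$ over $\rho=\rho_0$, and finite geodesic curvature at $p$ and $q$.

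\emph{Ambient symmetries and gluing.} Because $f$ is independent of the fibre coordinate, for every constant $c$ the map $\sigma_c\colon(x,t)\mapsto(x,2c-t)$ preserves $g=\pi_1^{*}g_\M+e^{2f}\pi_2^{*}g_0$; hence $\sigma_c$ is an isometry of $\mf$ fixing the slice $\h_c:=\h\times_f\{c\}$ pointwise, and vertical translations are isometries as well. Moreover each slice $\h_c$ is totally geodesic: its unit normal is $e^{-f}\xi$ and, by Lemma~\ref{l1}(2), for horizontal $X$ one gets $\overline{\nabla}_X(e^{-f}\xi)=X(e^{-f})\,\xi+e^{-f}(Xf)\,\xi=0$. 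After a vertical translation we may assume $u(\rho_0)=0$. Since $u_\rho\to\infty$ as $\rho\to\rho_0$, the vector $\xi$ is tangent to $\Sigma_u$ along its boundary circle $C:=\{\rho=\rho_0\}\subset\h_0$, i.e. $\Sigma_u$ meets $\h_0$ orthogonally along $C$. Setting $\widehat{\Sigma}_u:=\sigma_0(\Sigma_u)$, which is again an embedded $H$-surface (isometries preserve $g(\overrightarrow{H},\overrightarrow{N})$, and along $C$ the normal $\overrightarrow{N}$ is horizontal, hence fixed by $\sigma_0$), and using that $u$ is monotone with $u(\rho_0)=0$ so that $\Sigma_u$ and $\widehat{\Sigma}_u$ lie in opposite closed half-spaces and meet only along $C$, we conclude that $S^H:=\Sigma_u\cup\widehat{\Sigma}_u$ is a closed embedded surface, invariant under $\mathfrak{G}$, topologically the union of two discs along a common boundary circle (a sphere), which is a bi-graph over $\{\rho\le\rho_0\}$ with respect to $\h_0$.

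\emph{Regularity and conclusion.} On $\{0<\rho<\rho_0\}$ the surface $S^H$ is the graph of a smooth function, since the right-hand side of~\eqref{ea20} is smooth there ($e^{2f}\sinh^2\rho-G^2>0$); so smoothness only remains to be verified along $C$ and at the pole $p$. Along $C$, $S^H$ is obtained from the $H$-surface $\Sigma_u$ by reflection across the totally geodesic slice $\h_0$, which $\Sigma_u$ meets orthogonally, so by the classical reflection principle for constant mean curvature surfaces $S^H$ is a smooth $H$-surface in a neighbourhood of $C$. At $p$ the admissibility conditions $u_\rho(0)=0$ and finiteness of the geodesic curvature of $\Gamma_u$ at $p$ are exactly what is required for the profile curve to close up smoothly across the rotation axis, so $S^H$ is smooth near $p$ too. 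Hence $S^H$ is a smooth, closed, embedded, $\mathfrak{G}$-invariant surface of constant mean curvature $H$ that, up to a vertical translation, is a bi-graph with respect to $\h_0$, as claimed.

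\emph{The main obstacle} is the regularity at the two degenerate loci. Along the equator $C$ one needs both that the slices are totally geodesic and that $\Sigma_u$ meets $\h_0$ orthogonally (i.e. $u_\rho\to\infty$ at $\rho_0$) in order to apply the reflection principle; and at the pole one must use the built-in hypotheses $u_\rho(0)=0$ and finite geodesic curvature — these are precisely the conditions that upgrade the obvious $C^1$ closure of the rotational cap to genuine smoothness. Everything else (the single integration of the mean curvature equation and the topological identification of $S^H$ with a sphere) is routine.
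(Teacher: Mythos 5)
Your proposal follows essentially the same route as the paper's proof: reduce the mean curvature equation of Lemma \ref{intpa} to the first-order ODE \eqref{ea20} by a single integration (introducing the constant $d$ and the primitive $F$), observe that an admissible solution is precisely a solution of this ODE with the required boundary behaviour, and obtain $S^H$ by gluing the graph of $u$ with the graph of $-u$. The only difference is that you justify the gluing in detail (the reflection $t\mapsto -t$ is an isometry, the slices are totally geodesic, $\Sigma_u$ meets $\h_0$ orthogonally because $u_\rho\to\infty$ at $\rho_0$, and smoothness at the pole follows from the admissibility conditions), whereas the paper simply asserts that the two graphs can be glued; your added detail is correct and fills in exactly the step the paper leaves implicit.
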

\begin{proof}
We have denoted by  $\Sigma_u=\mathfrak{G}\Gamma_u$, the rotationally-invariant surface which is a graph of a function $u(\rho,\theta)\equiv u(\rho)$. If $\Sigma_u$ has non-negative constant mean curvature $H$ with respect to the downwards pointing unit normal vector field $\overrightarrow{N}$ then, by Lemma \ref{intpa} the function $u$ satisfies
\begin{equation}\label{e23}
-2H\  e^f\sinh\rho=\dfrac{\partial}{\partial_{\rho}}\left(\dfrac{e^f\ u_\rho}{W}\sinh\rho \right)
\end{equation}
where $W^2=e^{-2f}+u^2_\rho$. Recall that $F$, by Definition \ref{d2}, satisfies the ODE $F_\rho(\rho)=e^{f(\rho)} \sinh\rho$. Intregating equation \eqref{e23}, we obtain
\begin{equation}\label{e24}
d-2H\  e^fF=\dfrac{e^{2f}\ u_\rho}{\sqrt{1+(e^{f}\ u_\rho)^2}}\sinh\rho
\end{equation}
where $d\in\R$ is a constant. Once we are assuming that there is an admissible solution,  equation \eqref{e24} is equivalent to
\begin{equation}\label{e25}
u_\rho(\rho)=\pm\dfrac{d-2HF}{e^{f}\sqrt{e^{2f}\sinh^2\rho-(d-2HF)^2}}
\end{equation}
The admissible solution $u$  is a solution for equation \eqref{e25}. We can glue together the graph of the $u$ with the graph of $-u$ in order to obtain the rotational sphere $S^H$.   
\end{proof}

\subsection{Examples of rotational spheres}
Once rotationally-invariant spheres do not have to exists for every warping function $f=f(\rho)$. To see that the set of admissible solutions is non-empty, we consider the non-trivial warping function $f(\rho)$ given by
$$f(\rho)=\ln\left(2\cosh(\rho)\right)$$
From Lemma \ref{intpa}, the function $u$ which generates the rotationally-invariant surface $\Sigma_u=\mathfrak{G}\Gamma_u$ having constant mean curvatura $H$,  satisfies
\begin{equation}\label{e18}
-2H\sinh(2\rho) \ = \ \partial_{\rho}\left( \dfrac{e^{2f} \, u_\rho}{\sqrt{1+(e^{f}u_{\rho})^2}}\sinh\rho \  \right)
\end{equation}
By integrating equation \eqref{e18}, we obtain
\begin{equation}\label{FR}
    u_\rho(\rho)=\pm\frac{d-H\cosh(2\rho)}{2\cosh(\rho)\sqrt{\sinh^{2}(2\rho)-(d-H\cosh(2\rho))^{2}}}
\end{equation}
where $d\in\R$. Once here, we have the next corollary.

\begin{corollary}\label{c1}
Consider the warping function $f(\rho)=\ln\left(2\cosh(\rho)\right)$, where $\rho\ge0$ is the hyperbolic distance from the origin in the hyperbolic disk $\h$. For each positive constant $H>1$, there exists a rotational sphere $S^H$, embedded in the warped product $\h\times_f\R$ having constant mean curvature $H$,  which is invariant by the group $\mathfrak{G}$ and up to vertical translation, the sphere $S^H$ is a bi-graph with respect to the slice $\h_0=\h\times_f\{0\}$.

\end{corollary}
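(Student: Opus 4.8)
The plan is to verify that the specific warping function $f(\rho)=\ln(2\cosh\rho)$, together with a suitable choice of the constant $d$ depending on $H>1$, produces an admissible solution in the sense of Definition \ref{d2}, and then to invoke Theorem \ref{t2}. First I would record the explicit data: since $e^{f(\rho)}=2\cosh\rho$, we have $F_\rho(\rho)=e^{f(\rho)}\sinh\rho=2\cosh\rho\sinh\rho=\sinh(2\rho)$, so $F(\rho)=\tfrac12\cosh(2\rho)+\text{const}$; choosing the constant so that the algebra matches \eqref{FR}, one gets $G(\rho)=d-2HF(\rho)$ taking the form $d-H\cosh(2\rho)$ after absorbing constants, and the denominator $e^{2f}\sinh^2\rho-G^2=\sinh^2(2\rho)-(d-H\cosh(2\rho))^2$, consistent with \eqref{FR}. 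The natural choice forcing $G(0)=0$ is $d=H$, so that $G(\rho)=H(1-\cosh(2\rho))=-2H\sinh^2\rho\le 0$ — which has the wrong sign — so instead I would reconsider and take $G$ to be (up to the freedom in the integration constant of $F$) $G(\rho)=d-H\cosh(2\rho)$ with $d$ chosen so that $G(0)=d-H=0$, i.e. again $d=H$, giving $G(\rho)=H(1-\cosh 2\rho)\le 0$; since Definition \ref{d2} only requires $G(0)=0$ and $G$ of one sign with $|G|<e^f\sinh\rho$ on $(0,\rho_0]$ (and \eqref{e25} carries a $\pm$), I would simply work with $-G\ge 0$ throughout, or equivalently note the sign is immaterial to the existence argument.

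The core of the verification is the inequality $G(\rho)^2<e^{2f(\rho)}\sinh^2\rho$ on an interval $(0,\rho_0]$ with equality at $\rho_0$, which is exactly what guarantees that $u_\rho$ in \eqref{ea20}/\eqref{FR} is finite and positive on $(0,\rho_0)$ and blows up at $\rho_0$. Concretely, with $G(\rho)=H\cosh(2\rho)-H$ (taking $-G$), I would analyze the function
\begin{equation*}
\Phi(\rho)=\sinh^2(2\rho)-\bigl(H\cosh(2\rho)-H\bigr)^2=\sinh^2(2\rho)-H^2\bigl(\cosh(2\rho)-1\bigr)^2.
\end{equation*}
Writing $c=\cosh(2\rho)\ge 1$ and $\sinh^2(2\rho)=c^2-1=(c-1)(c+1)$, this factors as $\Phi=(c-1)\bigl[(c+1)-H^2(c-1)\bigr]$. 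For $\rho>0$ the first factor $c-1>0$, and the second factor $(c+1)-H^2(c-1)$ is positive at $\rho=0^+$ (it equals $2$ there), strictly decreasing in $c$ when $H>1$, and vanishes at $c^*=\tfrac{H^2+1}{H^2-1}>1$, i.e. at $\rho_0=\tfrac12\operatorname{arccosh}\!\bigl(\tfrac{H^2+1}{H^2-1}\bigr)<\infty$. Hence $\Phi>0$ on $(0,\rho_0)$, $\Phi(\rho_0)=0$, which is precisely the admissibility condition on $G$; note this is where $H>1$ is used, since for $H\le 1$ the second factor never vanishes and the profile never becomes vertical, so no sphere closes up. The remaining items of Definition \ref{d2} — that $u_\rho(0)=0$, that $u_\rho\to\infty$ as $\rho\to\rho_0$, and that the generating curve has finite geodesic curvature at the two endpoints — I would check by a local Taylor expansion: near $\rho=0$, $G(\rho)=O(\rho^2)$ while $e^f\sinh\rho=O(\rho)$, so $u_\rho=O(\rho)\to 0$ with the right parity to glue smoothly with $-u$ through a horizontal tangent plane; near $\rho_0$, $\Phi$ has a simple zero so $u_\rho\sim C(\rho_0-\rho)^{-1/2}$, which is integrable, giving a finite height $u(\rho_0)$ and a vertical tangent there, and the standard computation (as in the analysis of rotational surfaces in $\mathbb{H}^2\times\mathbb{R}$-type spaces) shows the geodesic curvature of the profile stays bounded at $\rho_0$.

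With admissibility established, Theorem \ref{t2} applies verbatim and yields a rotational sphere $S^H$ invariant under $\mathfrak{G}$, obtained by gluing the graph of $u$ to the graph of $-u$ along $\Gamma_{\rho_0}=\{\rho=\rho_0\}$ and capping smoothly at $\rho=0$; up to a vertical translation it is a bi-graph over the disc $\{\rho\le\rho_0\}$ in $\h_0$. The only point beyond Theorem \ref{t2} that the corollary asserts is \emph{embeddedness}: this follows because $u$ is a graph over the geodesic disc $\{\rho\le\rho_0\}$, $u_\rho$ does not change sign on $(0,\rho_0)$ so $u$ is monotone in $\rho$ and the upper and lower halves meet only along the equator circle $\rho=\rho_0$, so the surface is a topological sphere embedded in the solid region $\{\rho\le\rho_0\}\times\R$. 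I expect the main obstacle to be the bookkeeping around the integration constant (pinning down $d=H$ so that \eqref{FR} is literally reproduced and $G(0)=0$), together with the endpoint regularity at $\rho_0$ — checking that the $(\rho_0-\rho)^{-1/2}$ singularity of $u_\rho$ integrates to a genuinely smooth (not merely continuous) closing-up with bounded geodesic curvature, which is the one place a naive computation can go wrong.
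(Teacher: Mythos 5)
Your proposal is correct, but it takes a genuinely different route from the paper's. The paper does not verify admissibility by direct computation: it sets $d=H$ in \eqref{FR} and then argues by comparison with the unwarped case $\h\times\R$, citing the admissible solution of Sa Earp--Toubiana \cite{ST} for $2H>1$ and the two-sided bound \eqref{FR1} (the warping factor $e^{f}=2\cosh\rho$ is pinched between constants $m$ and $M$ on a compact interval), and asserts that ``the same ideas'' give admissibility of the solution of \eqref{FR}. Your argument is more self-contained and more informative: you pin down $d=H$ from $G(0)=0$, factor the discriminant $\sinh^2(2\rho)-H^2\bigl(\cosh(2\rho)-1\bigr)^2=(c-1)\bigl[(c+1)-H^2(c-1)\bigr]$ with $c=\cosh(2\rho)$, and read off the exact closing radius $\cosh(2\rho_0)=\frac{H^2+1}{H^2-1}$, which makes transparent precisely where $H>1$ enters (for $H\le 1$ the second factor never vanishes and the profile never turns vertical) --- information the paper's comparison argument does not supply. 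You also correctly flag, and correctly repair via the $\pm$ in \eqref{e25}, a sign inconsistency in Definition \ref{d2}: since $F_\rho=e^f\sinh\rho\ge 0$, the function $G=d-2HF$ is nonincreasing, so $G(0)=0$ forces $G\le 0$ on $(0,\rho_0]$ and the condition ``$G>0$'' as literally written is unattainable; passing to $-G$ (equivalently replacing $u$ by $-u$) resolves this. Your endpoint analysis ($u_\rho=O(\rho)$ at the axis, $u_\rho\sim C(\rho_0-\rho)^{-1/2}$ integrable at the equator) is what the paper implicitly imports from \cite{ST}. In short: both proofs share the skeleton (exhibit an admissible solution, apply Theorem \ref{t2}), but your explicit verification buys an exact formula for $\rho_0$ and a clean explanation of the threshold $H>1$, while the paper's comparison is shorter but leans on the literature and leaves that threshold unexplained.
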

\begin{proof}
For the product space $\h\times\R$, it was proved in \cite{ST} (or \cite{CP2}) the existence of an admissible solution which generates an embedded, compact, rotationally-invariant sphere $S$ having constant mean curvature $H$, for any constant satisfying $2H>1$. If we denote by $u_0$ this admissible solution, then $u_0$ satisfies the ODE
\begin{equation}\label{eis}
    (u_0)_\rho(\rho)=\frac{(2H-2H\cosh(\rho))}{\sqrt{\sinh^{2}(\rho)-(2H-2H\cosh(\rho))^{2}}},
\end{equation}
here $\rho\in[0,\rho_0]$, for some fixed $\rho_0>0$.

Take $d=H$, for $H>1$ in equation \eqref{FR}, following the same ideas presented in \cite{ST}, we see that the solution of equation \eqref{FR} is an admissible solution over an interval $[0,\rho_1]$ for some fixed $\rho_1>0$. Notice that, there exists constants $m$ and $M$ such that
\begin{equation}\label{FR1}
    \frac{H-H\cosh(2\rho)}{m\sqrt{\sinh^{2}(2\rho)-(H-H\cosh(2\rho))^{2}}}\le u \le  \frac{H-H\cosh(2\rho)}{M\sqrt{\sinh^{2}(2\rho)-(d-H\cosh(2\rho))^{2}}}
\end{equation}
in $[0,\rho_1]$. By Theorem \ref{t2}, the admissible solution $u$ generates the sphere $S^H$.

\end{proof}



\end{document}